\documentclass{scrartcl}

\usepackage[T1]{fontenc}
\usepackage[utf8]{inputenc}

\usepackage{hyperref}
\hypersetup{
  colorlinks=true,
  linkcolor=blue,
  citecolor=blue,
  urlcolor=blue,
  pdftitle={Finite-time blow-up in the three-dimensional fully parabolic attraction-dominated attraction-repulsion chemotaxis system},
  pdfauthor={J. Lankeit},
}
\textwidth178mm
\textheight240mm
\oddsidemargin-8mm
\evensidemargin-8mm

\usepackage{amsmath,amsthm,amssymb}

\usepackage{xcolor}

\usepackage{newunicodechar}
\newunicodechar{κ}{\kappa}
\newunicodechar{ℝ}{\mathbb{R}}
\newunicodechar{χ}{\chi}
\newunicodechar{ξ}{\xi}
\newunicodechar{α}{\alpha}
\newunicodechar{β}{\beta}
\newunicodechar{γ}{\gamma}
\newunicodechar{ν}{\nu}
\newunicodechar{δ}{\delta}
\newunicodechar{Δ}{\Delta}
\newunicodechar{∇}{\nabla}
\newunicodechar{∂}{\partial}
\newunicodechar{σ}{\sigma}
\newunicodechar{π}{\pi}
\newunicodechar{η}{\eta}
\newunicodechar{μ}{\mu}
\newunicodechar{θ}{\theta}
\newunicodechar{λ}{\lambda}
\newunicodechar{ℕ}{\mathbb{N}}
\newunicodechar{∞}{\infty}
\newunicodechar{ρ}{\rho}
\newunicodechar{ε}{\varepsilon}

\newcommand{\f}[2]{\frac{#1}{#2}}
\newcommand{\nn}{\nonumber}
\newcommand{\ds}{\mathrm{d}s}

\newcommand{\dsigma}{\mathrm{d}\sigma}
\newcommand{\drho}{\mathrm{d}\rho}

\newcommand{\calF}{\mathcal{F}}

\newcommand{\calD}{\mathcal{D}}
\newcommand{\io}{\int_{\Omega}}
\newcommand{\kl}[1]{\left(#1\right)}
\newcommand{\ddt}{\frac{d}{dt}}
\newcommand{\wzero}{w^{(0)}}
\newcommand{\vzero}{v^{(0)}}
\newcommand{\Om}{\Omega}
\newcommand{\Ombar}{\overline{\Omega}}
\newcommand{\norm}[2][]{\|#2\|_{#1}}

\newcommand{\setl}[1]{\left\{#1\right\}}
\newcommand{\Lom}[1]{L^{#1}(\Omega)}
\newcommand{\Wom}[2]{W^{#1,#2}(\Omega)}
\newcommand{\Tmax}{T_{max}}
\newcommand{\embeddedinto}{\hookrightarrow}
\newcommand{\wtilde}{\widetilde{w}}
\newcommand{\ztilde}{\widetilde{z}}
\newcommand{\bdry}{\big|_{\partial\Omega}}

\newcommand{\init}{^{(0)}}

\DeclareMathOperator{\sgn}{sgn}

\setlength{\parindent}{0pt}

\newtheorem{lemma}{Lemma}[section]
\newtheorem{theorem}[lemma]{Theorem}
\newtheorem{remark}[lemma]{Remark}
\newtheorem{cor}[lemma]{Corollary}

\title{Finite-time blow-up in the three-dimensional fully parabolic attraction-dominated attraction-repulsion chemotaxis system}

\author{Johannes Lankeit\footnote{lankeit@ifam.uni-hannover.de}\\
{\small Leibniz Universität Hannover, Institut für Angewandte Mathematik}\\
{\small Welfengarten~1, 30167 Hannover, Germany}}

\begin{document}
\maketitle 
\begin{abstract}
 We show that the attraction-repulsion chemotaxis system  
\begin{equation*}
\begin{cases} u_t = \Delta u - \chi\nabla\cdot(u\nabla v_1) + \xi\nabla\cdot(u\nabla v_2)\\
 \partial_t v_1 = \Delta v_1 - \beta v_1 + \alpha u \\
 \partial_t v_2 = \Delta v_2 - \delta v_2 + \gamma u,
\end{cases}
\end{equation*}
posed with homogeneous Neumann boundary conditions in bounded domains $\Omega=B_R \subset \mathbb{R}^3$, $R>0$, admits radially symmetric solutions which blow-up in finite time if it is attraction-dominated in the sense that $\chi\alpha-\xi\gamma>0$.\\

\textbf{Keywords:} chemotaxis; attraction-repulsion; blow-up\\
\textbf{Math Subject Classification (MSC2020):} 35B44, 92C17; 35Q92, 35K55
\end{abstract}

\section{Introduction}
Chemotaxis, mathematically often captured in the Keller--Segel system (\cite{KS,Horstmann,BBTW})
\begin{equation}\label{KS}
 \begin{cases}
  u_t = Δu - χ∇\cdot(u∇v),\\
  v_t = Δv - v + u,
 \end{cases}
\end{equation}
is the directed motion of biological agents (usually cells) towards ($χ>0$, chemo-attraction) or away from ($χ<0$, chemo-repulsion) higher concentrations of a signal substance. 

At least in the chemo-attractive case, in \eqref{KS} aggregation can be observed in the extreme form of finite-time blow-up (for solutions arising from some initial data), that is 
\[
 \limsup_{t\nearrow T} \norm[\Lom\infty]{u(\cdot,t)}=\infty
\]
with some finite $T>0$, (see e.g. \cite{win_bu_higherdim} for the higher- and \cite{mizo_win,horstmann_wang} for the 2-dimensional case, as well as \cite{nagai2001,herrero_velazquez,jaeger_luckhaus} for closely related systems; or the survey \cite{lan_win_survey}). Chemotaxis appearing in form of repulsion instead, on the other hand, can facilitate proofs of classical solvability or long-time behaviour (see, e.g. \cite{tomek_philippe_cristian}, \cite{frederic_init}).

These antithetical outcomes prompt the question: What happens if \textit{both} effects are present in the same model? 

Indeed, such a system, namely 
\begin{equation}\label{attraction-repulsion}
\begin{cases} u_t = Δu - χ∇\cdot(u∇v_1) + ξ∇\cdot(u∇v_2) \qquad  &\text{ in }\Omega\times (0,T),\\
 ∂_t v_1 = Δv_1 - βv_1 +αu &\text{ in }\Omega\times(0,T),\\
 ∂_t v_2 = Δv_2 - δv_2 +γu&\text{ in } \Omega\times(0,T)\\
∂_{ν} u = ∂_{ν} v = ∂_{ν} w = 0 &\text{ on } ∂\Omega\times(0,T),\\
u(\cdot,t)=u^{(0)},\quad v_1(\cdot,0)=v^{(0)}, \quad v_2(\cdot,0)=v_2^{(0)}&\text{ in } \Omega.
\end{cases}
\end{equation}
has been suggested as model for (a possible explanation of) the formation of plaques during early stages of Alzheimer's disease  in \cite{luca2003chemotactic}, where microglia (with density $u$) react to different substances by being attracted (concentration $v_1$) or repelled ($v_2$) by them; both signals are produced by glial cells themselves (or by other cells, which are not part of the model, in response to their presence). 

All parameters in this system (describing the different strengths of chemo-attraction and -repulsion, of signal production and decay of the signals) are assumed to be positive. A first summary of the character of \eqref{attraction-repulsion} is given by the parameter combination $χα-ξγ$. If $χα-ξγ>0$, the system is ``dominated by attraction'', if $χα-ξγ<0$, it is ``repulsion-dominated''. A justification of this nomenclature and condition is provided by the results in \cite{tao_wang}, where it was shown for a parabolic-elliptic-elliptic simplification of \eqref{attraction-repulsion} (i.e. $∂_tv_1$ and $∂_tv_2$ in \eqref{attraction-repulsion} replaced by $0$) that for any dimension $n\ge 2$ global classical bounded solutions exist in the repulsion-dominated case, and under the condition $β=δ$ blow-up is possible in the attraction dominated case already in spatially two-dimensional domains for certain initial data. The requirement $β=δ$ was removed in the radial case in \cite{espejo_suzuki} and for nonradial settings in \cite{li_li,yu_guo_zheng}. As in \eqref{KS} with $χ>0$, solutions with critical mass (\cite{nagai_yamada_globalexistence}; for the determination of the critical value see \cite{guo_jiang_zheng}) exist globally even in the attraction-dominant 2D case; if their mass is subcritical, they are bounded (\cite{nagai_yamada_boundedness}). If solutions blow up, some guaranteed time of existence can be estimated from properties of the initial datum \cite{viglialoro}.

A comparable dichotomy between attraction- and repulsion-dominated case is also known for the parabolic-parabolic-elliptic system variant, where it was shown in \cite{jin_wang_jde} for two-dimensional domains that $χα-ξγ\le 0$ ensures global existence, whereas in the case of $χα-ξγ>0$ the initial mass $\io u_0$ decides the possibility of blow-up. If solutions are already supposed to be bounded, mild conditions ensuring their convergence are available, \cite{lin_mu_zhou}.

For the fully parabolic system, \eqref{attraction-repulsion}, it is known that solutions in one-dimensional domains are global and converge, \cite{liu_wang,jin_wang_asymptotic}. In the two-dimensional repulsion-dominant case, solutions are global and bounded, \cite{dongmei_youshan,jin}, and, again for the repulsion-dominant case, global weak solutions were constructed if $\Om\subset ℝ^3$ in \cite{jin}. If $χα-ξγ=0$, \cite{lin_mu_wang} proved boundedness of solutions (and convergence for small initial mass $\int u\init$). If the repulsion dominance is even stronger, in the sense that the quotient $\f{ξγ}{χα}$ surpasses a certain value, \cite{jin_wang_global_stabilization}, even treating unequal diffusion rates for the chemicals, achieved a convergence result for the 2D case.

(For the fully parabolic or the simplified systems there are several further works showing boundedness owing to additional system components which have been identified as beneficial for boundedness in the chemotaxis literature, like decaying sensitivity functions (e.g. \cite{chiyo_mizukami_yokota,wu_wu}), nonlinear diffusion of porous-medium (e.g. \cite{zheng,li_wang}) or $p$-Laplacian type \cite{liyan}, or the addition of logistic decay terms \cite{wang_zhuang_zheng,liyan_wang,chiyo_mizukami_yokota}.) 

Concerning blow-up, however, the only results for the fully parabolic system \eqref{attraction-repulsion} seems to be the recent note \cite{yutaro_tomomi_21}, where finite-time blow-up was shown for radial solutions in $n\ge 3$ if $β=δ$.

In this article, we will show that blow-up can occur even without the restrictive condition $β=δ$, and does occur for initial data close to any prescribed initial condition, in the following sense:
\begin{theorem}\label{thm}
 Let $n = 3$, $R>0$ and $\Om=B_R\subset ℝ^n$. Let $α,β,γ,δ,χ,ξ\in(0,\infty)$ be such that $χα-ξγ>0$. 
 Let $u\init\in C(\Ombar)$, $v_1\init\in\Wom1{∞}$, $v_2\init\in \Wom1{∞}$ be radially symmetric functions which are positive in $\Ombar$ and satisfy $\io u\init=m_u$ for some $m_u>0$. Then for each $p\in(1,\f{2n}{n+2})$ there exist sequences $(u^{(k)})_{k\inℕ}\subset C(\Ombar)$ and $(v_1^{(k)})_{k\inℕ}, (v_2^{(k)})_{k\inℕ} \subset \Wom1{∞}$ of radially symmetric nonnegative functions satisfying $\io u^{(k)}=m_u$ for all $k\inℕ$ and 
 \[
  u^{(k)}\to u\init \text{ in } \Lom p,\quad v_1^{(k)} \to v_1\init \text{ and } v_2^{(k)} \to v_2\init \text{ in } \Lom{\f{2n}{n-2}}
 \]
 as $k\to \infty$ and such that for every $k\inℕ$ the solution of \eqref{attraction-repulsion} with initial data $(u^{(k)},v_1^{(k)},v_2^{(k)})$ blows up within finite time.
\end{theorem}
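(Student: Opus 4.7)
I pursue the Winkler-type strategy for blow-up in chemotactic systems (cf. \cite{win_bu_higherdim}, and \cite{yutaro_tomomi_21} for the variant $\beta=\delta$ of the present system): construct a free-energy functional $\calF$, prove that along global-in-time solutions $\calF$ is bounded below by a constant depending only on $m_u$, the system parameters and $\norm[\Lom{2n/(n-2)}]{v_i\init}$, and produce a sequence $(u^{(k)}, v_1^{(k)}, v_2^{(k)})$ with $u^{(k)}\to u\init$ in $\Lom p$ and $v_i^{(k)}\to v_i\init$ in $\Lom{2n/(n-2)}$ along which $\calF$ diverges to $-\infty$. The mismatch forces each of the approximating solutions to be non-global.

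The natural candidate is
\[
 \calF(u,v_1,v_2) = \io u\log u - \chi\io uv_1 + \xi\io uv_2 + \f{\chi}{2\alpha}\io(|\nabla v_1|^2+\beta v_1^2) - \f{\xi}{2\gamma}\io(|\nabla v_2|^2+\delta v_2^2),
\]
for which a direct computation along smooth radial solutions yields
\[
 \ddt\calF = -\io u|\nabla(\log u - \chi v_1 + \xi v_2)|^2 - \f{\chi}{\alpha}\io(\partial_t v_1)^2 + \f{\xi}{\gamma}\io(\partial_t v_2)^2.
\]
When $\beta=\delta$ the wrong-signed last term can be eliminated by passing to $w=\chi v_1-\xi v_2$, which satisfies a scalar Keller--Segel equation; for $\beta\ne\delta$ no such reduction is available, and absorbing $\io(\partial_t v_2)^2$ is the principal obstacle. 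I would expand $\io(\partial_t v_2)^2 = \io(\Delta v_2-\delta v_2+\gamma u)^2$ and use Young's inequality together with $\chi\alpha-\xi\gamma>0$ so that the $\gamma^2u^2$-contribution produced this way is offset by the $\alpha^2u^2$-piece hidden in $\io(\partial_t v_1)^2$, arriving at a differential inequality of the form $\ddt\calF\leq C(1+\calF^+)$. The lower bound $\calF(t)\ge -K$ for global solutions then combines mass conservation $\io u\equiv m_u$, parabolic smoothing for the $v_i$-equations initialized in $\Wom1\infty$, the Sobolev embedding $\Wom12\embeddedinto\Lom{2n/(n-2)}$, and the estimate $|\io uv_i|\leq \norm[\Lom{2n/(n+2)}]{u}\norm[\Lom{2n/(n-2)}]{v_i}$; via Young's inequality the indefinite terms of $\calF$ are absorbed into its nonnegative $\Wom12$-contributions.

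For the construction, fix $a,b\in\mathbb{R}$ with $3-3/p<a<b<1/2$ (a range nonempty exactly when $p<\f{2n}{n+2}=\f65$) and $r_k\searrow 0$. Set $u^{(k)}=u\init+B^{(k)}-\mu_k/|\Omega|$ with a smooth radial bump $B^{(k)}$ of mass $\mu_k=r_k^a$ supported in $B(0,r_k)$; $v_1^{(k)}=v_1\init+D^{(k)}$ with a smooth radial bump $D^{(k)}$ of height $r_k^{-b}$ on $B(0,r_k)$; and $v_2^{(k)}=v_2\init$. Elementary bounds give $u^{(k)}\to u\init$ in $\Lom p$ and $v_i^{(k)}\to v_i\init$ in $\Lom{2n/(n-2)}$, and $\io u^{(k)}=m_u$. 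The coupling $-\chi\io u^{(k)}v_1^{(k)}\lesssim -\chi\mu_kr_k^{-b} = -\chi r_k^{a-b}\to-\infty$ dominates all other (bounded) contributions to $\calF(u^{(k)},v_1^{(k)},v_2^{(k)})$, so this functional diverges to $-\infty$ as $k\to\infty$. For $k$ large enough this conflicts with the uniform lower bound established above, hence the solution emanating from $(u^{(k)},v_1^{(k)},v_2^{(k)})$ cannot be global and must blow up in finite time. The principal technical obstacle is the derivative estimate for $\calF$ when $\beta\ne\delta$: the standard Lyapunov identity then has a wrong-signed dissipation term, and rescuing the argument requires a quantitative absorption that critically uses the attraction-dominance $\chi\alpha>\xi\gamma$.
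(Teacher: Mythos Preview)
Your proposal has two genuine gaps that the paper's strategy is specifically designed to avoid.

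\textbf{The dissipation inequality does not close.} Your plan to neutralize the wrong-signed term $+\tfrac{\xi}{\gamma}\io(\partial_t v_2)^2$ by expanding $(\Delta v_2-\delta v_2+\gamma u)^2$ and matching $u^2$-pieces only handles the $\gamma^2 u^2$ contribution; the condition $\chi\alpha>\xi\gamma$ indeed makes $-\tfrac{\chi}{\alpha}\alpha^2+\tfrac{\xi}{\gamma}\gamma^2<0$. But after that cancellation you are still left with $+\tfrac{\xi}{\gamma}\io(\Delta v_2-\delta v_2)^2$ and cross-terms involving $u\,\Delta v_2$, none of which can be absorbed into $\calF$ (which contains only $\io|\nabla v_2|^2$, and with a negative coefficient) or bounded by a constant, since no a~priori $W^{2,2}$ control on $v_2$ is available when $u$ is merely in $L^1$. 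The paper avoids the three-component functional altogether: it passes to $w=(\chi\alpha-\xi\gamma)u$, $z=\chi v_1-\xi v_2$, $v=v_1$, so that the system becomes Keller--Segel in $(w,z)$ perturbed by a linear source $+bv$ in the $z$-equation. The two-variable functional $\calF(w,z)$ then satisfies $\tfrac{d}{dt}\calF+\calD=b\io v(az-\Delta z+w)\le \tfrac12\calD+\tfrac{b^2}{2}\norm[\Lom2]{v}^2$, and $\norm[\Lom2]{v}$ is bounded uniformly in time by a semigroup estimate (this is exactly where $n=3$ enters).

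\textbf{Finite-time blow-up does not follow from your logic.} Even granting an inequality $\tfrac{d}{dt}\calF\le C(1+\calF^+)$ together with a lower bound $\calF(t)\ge -K$ along global trajectories, no contradiction ensues: starting from $\calF_0<-K$, $\calF$ may simply rise at rate $C$ until it exceeds $-K$, after which globality is consistent with both bounds. (Your lower-bound sketch is itself problematic: absorbing $\chi\io uv_1$ via H\"older requires $\norm[\Lom{6/5}]{u}$, which is not available a~priori, and the $v_2$-part of your $\calF$ is negative definite, so you would also need an \emph{upper} bound on $\norm[\Wom12]{v_2}$; parabolic smoothing from $u\in L^1$ only yields $\nabla v_2\in L^p$ for $p<\tfrac{n}{n-1}=\tfrac32$.) The paper instead invokes Winkler's functional inequality $\calF(w,z)\ge -C(\calD(w,z)^{\theta}+1)$ with $\theta\in(\tfrac12,1)$, valid on a set $S(m,M,B,\kappa)$ in which the solutions are shown to remain. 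Combined with $\tfrac{d}{dt}\calF+\tfrac12\calD\le C$ this produces the superlinear ODE $\tfrac{d}{dt}(-\calF)\ge c\,(-\calF-1)_+^{1/\theta}-C$, which forces $\calF\to-\infty$ in finite time whenever $\calF_0$ lies below an explicit threshold. This is the mechanism that delivers \emph{finite}-time blow-up, and it is absent from your outline.
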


One reason why blow-up has been detected in the parabolic-elliptic-elliptic case more frequently is that there the method of blow-up detection by a study of second moments can be employed, which for the parabolic-elliptic variant of \eqref{KS} goes back to \cite{nagai1995,biler_nadzieja_I,nagai2001} (for a short overview of the core idea see also \cite[Section 2.1.2]{lan_win_survey}). This method however, crucially relies on the equation for the signal being elliptic. 

Other blow-up proofs for \eqref{KS} stem from an energy functional, and, indeed, it is this functional on which \cite{yutaro_tomomi_21} (for $β=δ$) and the present article are based. In order to clarify the role of the condition $β=δ$  (and to better connect to results concerning \eqref{KS}), let us employ the following transformations in \eqref{attraction-repulsion}: 

We set 
\begin{subequations}\label{trafo}
\begin{align}
 w & = (χα-ξγ) u\label{trafo:w}\\
 z &= χv_1 - ξv_2\label{trafo:z}\\
 v &= v_1\label{trafo:v}\\[-5mm]
\intertext{and }
\label{paramtrafo}
 a=δ,\quad& b=(δ-β)χ,\quad c= β,\quad d=\f{α}{χα-ξγ}.
\end{align}
\end{subequations}
From now on, we always assume that $a>0$, $b\in ℝ$, $c>0$, $d>0$ (or, in terms of \eqref{attraction-repulsion}, that  $α,β,γ,δ,χ,ξ>0$ with $χα-ξγ>0$) and the domain $\Omega=B_R(0)\subset ℝ^n$ are fixed. (Accordingly, all constants appearing in the calculation may depend on these parameters, without this dependence being mentioned explicitly.) 

We then obtain that if $(u,v_1,v_2)$ is a solution of \eqref{attraction-repulsion}, then $(w,z,v)$ satisfies 
\begin{align*}
 w_t 
 & = (χα-ξγ) \kl{Δu - ∇\cdot(u ∇(-χv_1-ξv_2))}\\
 &= Δw - ∇\cdot(w∇z)\\[-8mm]
\end{align*}
and 
\begin{align}
 z_t 
  &= Δ(χv_1-ξv_2) - χδv_1 + χ(δ-β)v_1 + ξδv_2 + (χα-ξγ)u\label{zeqtrafo}\\
  &= Δz - az + b v + w \nn\\[-8mm]\nn
\end{align}
as well as 
\begin{align*}
 v_t &= 
 Δv_1 - βv_1 +αu\\
 &= Δv - cv + dw
\end{align*}
and vice versa.

If we additionally transform the inital data according to 
\begin{equation}\label{trafo:init}
  w\init = (χα-ξγ)u\init,\quad z\init=χv_1\init-ξv_2\init,\qquad v\init=v_1\init,
\end{equation}
we thus are interested in solutions of 
%
%
%
%
%
%
%
\begin{subequations}\label{attraction-repulsion-transformed}
\begin{align}
 w_t &= Δw - ∇\cdot(w∇z) \label{weq}\\
 z_t &= Δz - az + bv + w \label{zeq}\\
 v_t &= Δv - cv + dw \label{veq}\\
 ∂_{ν}w\bdry =& ∂_{ν} z\bdry = ∂_{ν} v\bdry = 0\label{trafo.bc}\\
 w(\cdot,0)=&w\init,\; z(\cdot,0)=z\init,\;v(\cdot,0)=v\init
\end{align}
\end{subequations}
In the attraction-dominant case -- which we are dealing with --, $χα>ξγ$ and hence $w\ge 0$. If $χα=ξγ$, the transformation \eqref{trafo:w} is inadvisable ($w=u$ being a better choice then), but the last term in the first line of \eqref{zeqtrafo} disappears and thus shows why the balanced case $χα=ξγ$ allows for better existence and boundedness results in \cite{lin_mu_wang} in the three-dimensional case. If $χα<ξγ$, similar transformations would still be possible, but the sign change of the source term $+dw$ in \eqref{veq} 
would considerably change the character of the system.

If $b=0$, that is, $β=δ$, the first two equations in \eqref{attraction-repulsion-transformed} are decoupled from the third and, more importantly, form the classical (attraction-only) Keller--Segel system of chemotaxis, \eqref{KS}, whose well-known blow-up results (here: \cite{win_bu_higherdim}) thereby can directly be transferred. (This is the observation \cite{yutaro_tomomi_21} is based on.) 

These blow-up proofs rest on the functional 
\begin{equation}\label{def:F}
 \calF (w,z) = \io w\ln w + \f a2 \io z^2 + \f12 \io |∇z|^2 - \io wz
\end{equation}
(if we assume $χ=1$ in \eqref{KS} and $a=1$). 
With 
\begin{equation}\label{def:D}
 \calD (w,z) = \io w|∇(\ln w -z)|^2 + \io \kl{az-Δz+w}^2
\end{equation}
we then have (for solutions $(w,z)$ of \eqref{KS}) that 
\begin{equation}\label{lf}
 \ddt \calF(w,z) + \calD(w,z) = 0.
\end{equation}
Therefore $\calF$ decreases along the trajectories of solutions. This Lyapunov functional is not only helpful in obtaining boundedness of solutions with small mass (on the set of which $\calF$ can be shown to be bounded from below in in two-dimensional scenarios, where also the resulting bounds can be used to bootstrap higher regularity, cf. \cite[Lemma 3.3]{BBTW}), but also extremely advantageous for finding blow-up solutions (see \cite[Section 2.1.3]{lan_win_survey}): It is known that $\calF(\wtilde,\ztilde)\ge -K$ for some $K>0$ on the set of stationary solutions $(\wtilde,\ztilde)$ and that every bounded solution must converge to such a stationary state (at least along a sequence of times $(t_j)_{j\inℕ}$ with $t_j\to \infty$). But since $\calF$ is decreasing and one can find initial data $(w\init,z\init)$ such that already $\calF(w\init,z\init)<-K$, the corresponding solutions cannot be global and bounded. 

However, despite \eqref{weq} coinciding with the first equation of \eqref{KS}, \eqref{attraction-repulsion-transformed} and \eqref{KS} are not identical. Let us compute $\ddt\calF(w,z)$ for solutions of \eqref{attraction-repulsion-transformed}: 
\begin{align*}
 \ddt \calF(w,z) &= \ddt \kl{\io w\ln w + \f a2 \io z^2 + \f12 \io |∇z|^2 - \io wz}\\
 &= \io w_t \ln w - \io w_t z + a\io z_tz + \io ∇z\cdot ∇z_t  - \io wz_t\\
 &= \io ∇\cdot\kl{w\kl{∇\ln w-∇z}} \kl{\ln w-z} + \io z_t\kl{az-Δz -w }\\
 &= -\io w|∇(\ln w - z)|^2 - \io \kl{az-Δz -w }^2 + b \io v\kl{az-Δz -w }.
\end{align*}
Apparently, we have 
\begin{equation}\label{energyinequality}
 \ddt \calF(w,z) + \calD(w,z) = b\io  v (az-Δz+w), 
\end{equation}
with no discernible sign on the right-hand side, thus losing $\calF$'s Lyapunov property.

Two remarks seem in order: Firstly, the related functional 
\[
 F(u,v_1,v_2)=\io u\ln u - χ\io uv_1 + ξ \io uv_2 + \f{χ}{2α}\io (βv_1^2+|∇v_1|^2) - \f{ξ}{2γ}\io (δv_2^2+|∇v_2|^2)
\]
has been identified as Lyapunov functional for the parabolic-parabolic-elliptic system in \cite{jin_wang_jde}. (We give it in the original variables, since \eqref{trafo:z}, or rather \eqref{zeqtrafo}, does not cope well with the equation for $v_2$ in \eqref{attraction-repulsion} being elliptic); however, for the computations (in particular identities like \cite[(5.65)]{jin_wang_jde}; see also \cite[(2.14)]{lin_mu_gao} for a related functional in a system with nonlinear diffusion), which ensured that no term like that on the right of \eqref{energyinequality} remained, the ellipticity of the equation for $v_2$ was imperative. 

Secondly: Actually, it is not important that $\calF$ be a Lyapunov functional. Already refining the above-sketched argument why a solution cannot be global and bounded to the assertion of blow-up within finite time (see \cite{win_bu_higherdim}) uses that $\calD$ can be estimated by a superlinear power of $\calF$, i.e. 
\begin{equation}\label{superlinearestimate}
\calD(w,z)\ge(-c\calF(w,z)-1)_+^{λ}
\end{equation}
for some $c>0, λ>1$ and all functions $(w,z)$ from a suitable set to which solutions belong, 
so that \eqref{lf} turns into 
\[
 \ddt \calF(w,z) + (-c\calF(w,z)-1)_+^{λ} \le 0
\]
or rather 
\begin{equation}\label{eq:busource}
 \ddt \kl{-c\calF(w,z)-1} \ge (-c\calF(w,z)-1)_+^{λ}, 
\end{equation}
ensuring finite-time blow-up whenever $-c\calF(w\init,z\init)-1>0$ due to $λ>1$. However, this line of reasoning can prevail over the substraction of an additional constant on the right of \eqref{eq:busource}. Hence, if we estimate the term in \eqref{energyinequality} by Young's inequality and the definition of $\calD$
\begin{equation}\label{estimatebadterm}
 b\io  v (az-Δz+w) \le \f12 \io \kl{az-Δz -w }^2 + \f{b^2}2 \io v^2 \le \f12 \calD(w,z) + \f{b^2}2 \io v^2,
\end{equation}
investing half of the dissipation term, we only need to ensure boundedness of $\norm[\Lom2]{v(\cdot,t)}$ to employ essentially the same argument. Said boundedness will be achieved in Section~\ref{sec:vbound} from a short application of semigroup estimates. (This is the only place where the spatial dimension may not exceed $3$. We will keep all other lemmata general, writing $n$ there.) Beforehand, we state a local existence result (Section~\ref{sec:locex}), prepare a superlinear estimate akin to \eqref{superlinearestimate} (Section~\ref{sec:superlinearestimate}) for functions belonging to a certain set $S(m,M,B,κ)$ (as defined in \eqref{defS}) and ensure that $(w(\cdot,t),z(\cdot,t))$ lie in this set if the initial data satisfy simple conditions (Lemma~\ref{lem:remaininset}).  In Section~\ref{sec:diffineq}, we combine all previous parts to obtain a differential inequality like \eqref{eq:busource}. After recalling unboundedness of $\calF$ in Section~\ref{sec:initdata}, we finally prove Theorem~\ref{thm} in Section~\ref{sec:proof}. 

\section{Local existence}\label{sec:locex}
In a first step we ensure existence of solutions. For the initial data we assume nonnegativity for two of the components and some regularity, 
\begin{equation}\label{init-reg}
 0\le w\init\in C(\Ombar), \qquad 0\le v\init \in C(\Ombar), \qquad z\init\in \Wom1{∞},
\end{equation}
with radial symmetry being a frequent later additional assumption.
\begin{lemma}\label{lem:ex}
Let $(w\init,z\init,v\init)$ be as in \eqref{init-reg}. Then there are $\Tmax>0$ and 
 \[
  (w,z,v)\in C^{2,1}(\Ombar\times(0,\Tmax))\cap C(\Ombar\times[0,\Tmax))
 \]
 such that $(w,z,v)$ solves \eqref{attraction-repulsion-transformed} classically and 
 \[
  \text{either } \Tmax=\infty \quad \text{ or } \quad \limsup_{t\nearrow \Tmax}\norm[\Lom\infty]{w(\cdot,t)}=\infty.
 \]
 Moreover, $(w,z,v) (\cdot,t)$ is radially symmetric for every $t\in(0,\Tmax)$ if $w\init,v\init,z\init$ are radially symmetric. Finally, $w\ge 0$ and $v\ge 0$ in $\Om\times(0,\Tmax)$.
\end{lemma}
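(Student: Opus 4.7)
The plan is to prove this by a fairly standard contraction mapping argument, following the pattern used for other Keller--Segel-type systems, and then to establish the extensibility criterion, nonnegativity, and radial symmetry separately once a solution exists.

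For local existence, I would fix $T>0$ to be chosen small, work in the Banach space $X_T = C([0,T];C(\Ombar))$, and on a closed ball $\mathcal{B}$ of radius $R$ around the triple of constant-in-time initial data in $X_T^3$ define an operator $\Phi$ as follows. Given $(\wtilde,\ztilde,\vtilde)\in\mathcal{B}$, let $v$ solve the linear heat problem $v_t=\Delta v-cv+d\wtilde$ with Neumann data and initial datum $v\init$, let $z$ solve $z_t=\Delta z-az+b\vtilde+\wtilde$ with initial datum $z\init$, and finally let $w$ solve the linear advection--diffusion problem $w_t=\Delta w-\nabla\cdot(w\nabla z)$ with initial datum $w\init$. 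Each subproblem has a unique solution in a suitable parabolic Hölder or continuous class by classical semigroup/$L^p$ theory; the main point is that the analytic Neumann-heat semigroup $(e^{t\Delta})_{t\geq 0}$ on $\Om$ satisfies gradient smoothing estimates $\norm[\Lom\infty]{\nabla e^{t\Delta}f}\le C t^{-1/2}\norm[\Lom\infty]{f}$ (and analogous bounds starting from $W^{1,\infty}$), which are enough to make $\Phi$ map $\mathcal{B}$ into itself and to be a contraction on $\mathcal{B}$ once $T$ is small enough; the constraint $z\init\in\Wom1\infty$ is precisely what is needed so that $\nabla z$ has an $L^\infty$-bound uniformly in $t\in[0,T]$, thereby controlling the drift coefficient in the $w$-equation. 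Parabolic Schauder theory then upgrades the fixed point to $C^{2,1}(\Ombar\times(0,T))$. Extending by standard continuation yields $\Tmax\in(0,\infty]$ maximal.

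For the extensibility criterion, I would show that boundedness of $\norm[\Lom\infty]{w(\cdot,t)}$ on $[0,\Tmax)$ propagates through the system: the variation-of-constants formula for $v$ gives $\norm[\Lom\infty]{v(\cdot,t)}\le\norm[\Lom\infty]{v\init}+d\int_0^t e^{-c(t-s)}\norm[\Lom\infty]{w(\cdot,s)}\ds$, and a similar bound holds for $z$ using the $w$- and $v$-bounds. Gradient-smoothing of the semigroup then provides bounds for $\nabla z$ in $L^\infty$. With these controls on the coefficients of all three equations uniform up to $\Tmax$, one can restart the fixed-point construction at times close to $\Tmax$ to obtain a strict extension, contradicting maximality. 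Hence, if $\Tmax<\infty$ then $\limsup_{t\nearrow \Tmax}\norm[\Lom\infty]{w(\cdot,t)}=\infty$.

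Nonnegativity of $v$ follows from the parabolic maximum principle applied to $v_t-\Delta v+cv=dw$ in combination with $w\ge 0$ and $v\init\ge 0$. Nonnegativity of $w$ follows from writing its equation as $w_t-\Delta w+\nabla z\cdot\nabla w+(\Delta z)\,w=0$ and regarding $\nabla z,\Delta z$ as given coefficients (which are bounded on $[\tau,T]\times\Ombar$ for any $0<\tau<T<\Tmax$ by parabolic regularity of $z$); the classical weak maximum principle for linear parabolic equations with bounded coefficients then yields $w\geq 0$. Radial symmetry is inherited from the initial data by the uniqueness component of the fixed point: for every rotation $R$ of $\mathbb{R}^n$ preserving $\Om$, the triple $(w\circ R,z\circ R,v\circ R)$ also solves \eqref{attraction-repulsion-transformed} with initial data $(w\init\circ R,z\init\circ R,v\init\circ R)$, which coincide with the originals when the data are radial, so by uniqueness the solution is invariant under every such $R$. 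I do not expect any of these steps to be the actual obstacle; the only mildly delicate point is combining the gradient smoothing of $\nabla z$ with the advective term in the $w$-equation to close the fixed-point iteration, and this is by now standard for Keller--Segel-type systems.
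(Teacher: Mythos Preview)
Your proposal is correct and follows precisely the approach the paper has in mind: the paper's own proof consists of a single sentence citing the standard fixed-point reasoning for chemotaxis systems (\cite[Lemma~3.1]{BBTW}, \cite[Lemma~3.1]{tao_wang}) together with the comparison principle for nonnegativity, and your sketch is exactly a fleshed-out version of that argument. There is nothing to add.
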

\begin{proof}
This local existence result can be obtained by the fixed-point based reasoning well-established in the context of chemotaxis models, \cite[Lemma 3.1]{BBTW} (see also \cite[Lemma 3.1]{tao_wang}); nonnegativity for the first and third component follow from the comparison principle.
\end{proof}
\begin{remark}
 We do not assert positivity of the second component, since we do not want to impose a restriction on the sign of $b$.
\end{remark}

\section{Estimating \texorpdfstring{$\calF$}{F} in terms of \texorpdfstring{$\calD$}{D} on the set \texorpdfstring{$S$}{S}}\label{sec:superlinearestimate}
We let 
\begin{align}\label{defS}
 S(m,M,B,κ) =& \bigg\{(w,z)\in C^1(\Ombar)\times C^2(\Ombar)\mid w \text{ and } z \text{ are radially symmetric},\nn\\
 &\quad\qquad  w \text{ is positive }, \io w=m, \io |z| \le M, ∂_{ν}z=0 \text{ on } ∂\Omega, \text{ and } |z(x)|\le B|x|^{-κ}\bigg\}
\end{align}
and show that for $(w,z)\in S(m,M,B,κ)$, $\calD(w,z)$ corresponds to a superlinear power of $\calF(w,z)$. The key for this estimate is the following lemma:

\begin{lemma}\label{lem:estimate:wz}
 Let $m>0$, $M>0$, $κ>n-2$,  $B>0$. Then there are $C=C(m,M,B,κ)>0$ and $θ\in(\f12,1)$ such that for every $(w,z)\in S(m,M,B,κ)$ we have 
 \[
  \io w |z| \le C\cdot\kl{\norm[\Lom2]{Δz-az+w}^{2θ} + \norm[\Lom2]{\sqrt{w}\kl{∇\ln w - z}}+1}.
 \]
\end{lemma}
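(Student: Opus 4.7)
The strategy combines the pointwise bound $|z(x)|\le B|x|^{-\kappa}$ with Gagliardo–Nirenberg applied to $\sqrt w$ and elliptic regularity for $z$. First, fix $\rho\in(0,R]$ to be optimized and split
\[
\io w|z| = \int_{B_\rho}w|z|+\int_{\Omega\setminus B_\rho}w|z| \le \int_{B_\rho}w|z|+Bm\rho^{-\kappa}.
\]
For the inner part, Hölder's inequality with an exponent $q$ in the interval $\bigl(\tfrac{n}{n-\kappa},\tfrac{n}{n-2}\bigr)$---which is nonempty thanks to the hypothesis $\kappa>n-2$---gives $\int_{B_\rho}w|z|\le CB\rho^{n/q'-\kappa}\|w\|_{L^q(\Omega)}$, where the pointwise bound together with $\kappa q'<n$ produces the positive exponent $n/q'-\kappa$. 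Gagliardo–Nirenberg applied to $\sqrt w$ (using $\io w=m$) then yields $\|w\|_{L^q}\le C(\|\nabla\sqrt w\|_{L^2}^{2\alpha}+1)$ with $\alpha=n(q-1)/(2q)$, and after optimizing over $\rho$ the algebraic identity $\alpha q'=n/2$ collapses the exponent to $\kappa$:
\[
\io w|z|\le C\bigl(\|\nabla\sqrt w\|_{L^2}^{\kappa}+1\bigr).
\]

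The remaining task is to bound $\|\nabla\sqrt w\|_{L^2}^\kappa$ by the two ingredients of $\calD$. From $2\nabla\sqrt w=\sqrt w\,\nabla(\ln w-z)+\sqrt w\,\nabla z$ the only missing piece is $\io w|\nabla z|^2$. This I would handle by $L^2$-elliptic regularity applied to $-\Delta z+az=w-g$, which together with Sobolev embedding $W^{2,2}\hookrightarrow W^{1,2n/(n-2)}$ gives $\|\nabla z\|_{L^{2n/(n-2)}}\le C(\|w\|_{L^2}+\|g\|_{L^2})$; combining with Hölder's inequality and two further Gagliardo–Nirenberg estimates on $\|w\|_{L^{n/2}}$ and $\|w\|_{L^2}$ produces $\io w|\nabla z|^2\le C(\|\nabla\sqrt w\|_{L^2}^4+\|\nabla\sqrt w\|_{L^2}\|g\|_{L^2}^2)$. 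Substituting back yields a self-referential inequality $X^2 \le C(A^2+X^4+XG^2)$ with $X=\|\nabla\sqrt w\|_{L^2}$, $A=\|\sqrt w\,\nabla(\ln w-z)\|_{L^2}$, $G=\|g\|_{L^2}$; Young's inequality, a case distinction according to the size of $X$, and absorption finally produce the claim with some $\theta\in(\tfrac12,1)$.

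The main obstacle is precisely this self-referential step: the term $\io w|\nabla z|^2$ is not directly controlled by $\calD$ and its elliptic control introduces the higher power $\|\nabla\sqrt w\|^4$, whose presence must be overcome by careful bookkeeping of the Young and Hölder exponents. The hypothesis $\kappa>n-2$ is tight and appears twice: first, in keeping the Hölder exponent $q$ in the initial estimate strictly below the Sobolev-critical value $n/(n-2)$ for $\sqrt w\in H^1$, and second in ensuring the final exponent $\theta$ lies strictly below $1$, which is essential for the superlinearity needed in the subsequent blow-up argument.
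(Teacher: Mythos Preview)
The argument has a genuine gap at the self-referential step. With $X=\|\nabla\sqrt{w}\|_{L^2}$, $A=\|\sqrt{w}\,\nabla(\ln w-z)\|_{L^2}$ and $G=\|\Delta z-az+w\|_{L^2}$, your chain gives $X^2\le C(A^2+X^4+XG^2)$, and you then assert that Young's inequality plus absorption yield $X^\kappa\le C(G^{2\theta}+A+1)$. But the inequality $X^2\le C(A^2+X^4+XG^2)$ places \emph{no} upper bound on $X$ whatsoever: for $A=G=0$ it reads $X^2\le CX^4$, which every $X\ge C^{-1/2}$ satisfies. No rearrangement of Young exponents can extract a bound from an inequality that is trivially true for large $X$. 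The structural reason is that treating $\int w|\nabla z|^2$ via $W^{2,2}$-elliptic regularity drags in $\|w\|_{L^2}^2\sim X^3$ (for $n=3$), whence the fatal $X^4$. (A smaller issue: the interval $\bigl(\tfrac{n}{n-\kappa},\tfrac{n}{n-2}\bigr)$ is nonempty only when $\kappa<2$, not because $\kappa>n-2$; in particular for $n\ge 4$ it is always empty, so even your opening step fails outside $n=3$.)

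The paper, following Winkler, never attempts to control $\|\nabla\sqrt{w}\|_{L^2}$. Instead it estimates $\int_\Omega|\nabla z|^2$ directly: testing the $z$-equation by $|z|$ gives $\int w|z|\le 2\int|\nabla z|^2+C(G^{\frac{2n+4}{n+4}}+1)$, so it suffices to bound $\int|\nabla z|^2$. The outer piece $\int_{\Omega\setminus B_{r_0}}|\nabla z|^2$ is handled by testing against $\operatorname{sgn}(z)|z|^\alpha$ together with the pointwise bound on $z$, while the inner piece $\int_{B_{r_0}}|\nabla z|^2$ is obtained from the \emph{one-dimensional radial ODE} for $z_r$ and yields a bound of the form $C\bigl(r_0\,G^2+A+\|z\|_{L^2}^2+1\bigr)$; the small prefactor $r_0$ in front of $G^2$ is precisely what permits optimization in $r_0$ and closes the loop. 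Radial symmetry thus enters essentially, through the ODE argument, and not merely through the pointwise decay $|z|\le B|x|^{-\kappa}$; an approach based solely on elliptic regularity and Gagliardo--Nirenberg does not appear to suffice.
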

\begin{proof} For the case of $a=1$ and the additional assumption $z\ge 0$, this is the statement of \cite[Lemma 4.1]{win_bu_higherdim}. Only minor changes are required to make it applicable here. We include a brief discussion of the necessary adjustments in Appendix~\ref{Michaelsproof}.
\end{proof}

\begin{lemma}\label{lem:estimateforF}
 Let $m>0$, $M>0$, $κ>n-2$,  $B>0$. Then there are $C=C(m,M,B,κ)>0$ and $θ\in(\f12,1)$ such that for every $(w,z)\in S(m,M,B,κ)$ we have 
 \[
  \calF(w,z)\ge - C(m,M,B,κ) \kl{\calD^{θ}(w,z)+1}.
 \]
\end{lemma}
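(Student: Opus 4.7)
The plan is to assemble $\calF(w,z)$ term by term, keeping the positive contributions and discarding the only potentially negative one, namely $-\io wz$, through a direct appeal to Lemma~\ref{lem:estimate:wz}.

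First I would isolate a simple pointwise lower bound on the entropy: since $s\ln s \ge -\f{1}{e}$ for all $s>0$, the $w\ln w$-integral satisfies $\io w\ln w \ge -\f{|\Omega|}{e}$. The two middle terms of $\calF$, namely $\f{a}{2}\io z^2$ and $\f12\io|\nabla z|^2$, are manifestly nonnegative and I would simply drop them. So the only nontrivial estimate needed is one that controls $\io wz$ from above (in absolute value) by a power of $\calD(w,z)$.

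For that I would invoke Lemma~\ref{lem:estimate:wz} directly: for $(w,z)\in S(m,M,B,\kappa)$,
\[
\io wz \le \io w|z| \le C\kl{\norm[\Lom2]{\Delta z - az + w}^{2\theta} + \norm[\Lom2]{\sqrt{w}(\nabla\ln w - z)} + 1}.
\]
By definition of $\calD$, the first norm to the power $2\theta$ is bounded by $\calD^\theta(w,z)$, and the square of the second norm equals $\io w|\nabla(\ln w - z)|^2$, which is bounded by $\calD(w,z)$. Hence the right-hand side above is at most
\[
C\kl{\calD^{\theta}(w,z) + \calD^{\f12}(w,z) + 1}.
\]

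The only point requiring a moment of care is consolidating the two powers of $\calD$ into a single $\calD^\theta$. This uses that $\theta\in(\f12,1)$: if $\calD(w,z)\ge 1$, then $\calD^{\f12}\le \calD^{\theta}$, while if $\calD(w,z)< 1$, then $\calD^{\f12}\le 1$. Either way $\calD^{\f12}(w,z)\le \calD^{\theta}(w,z)+1$. Combining these estimates yields
\[
\calF(w,z) \ge -\f{|\Omega|}{e} - C\kl{\calD^\theta(w,z)+1} \ge -C'\kl{\calD^\theta(w,z)+1}
\]
with $C'=C'(m,M,B,\kappa)$, which is the desired inequality. The main (and only genuine) obstacle here is Lemma~\ref{lem:estimate:wz} itself; once it is granted, the remainder of the proof is bookkeeping about the two simple lower bounds on the positive parts of $\calF$ and the absorption of the $\calD^{1/2}$ term using $\theta>\f12$.
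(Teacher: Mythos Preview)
Your proof is correct and follows essentially the same route as the paper's: drop the nonnegative terms $\f{a}{2}\io z^2$ and $\f12\io|\nabla z|^2$, bound $\io w\ln w\ge -\f{|\Omega|}{e}$, invoke Lemma~\ref{lem:estimate:wz} for $\io w|z|$, and then absorb $\calD^{1/2}$ into $\calD^{\theta}+1$ using $\theta>\f12$. Your treatment of the absorption step is in fact a bit more explicit than the paper's, but the argument is the same.
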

\begin{proof}
 As in \cite[Lemma 5.1]{win_bu_higherdim}, this follows directly from Lemma~\ref{lem:estimate:wz}: With some $C_1>0$
, $C_2>0$ and $θ\in(\f12,1)$, 
 \begin{align*}
  \calF(w,z) &= \io w\ln w + \f{a}2\io z^2+\f12\io|∇z|^2-\io wz\\
  &\ge -\f{|\Om|}{e} - \io w\left|z\right|\\
  &\ge - C_1 (1+\norm[\Lom2]{Δz-az+w}^{2θ} + \norm[\Lom2]{\sqrt{w}\kl{∇\ln w - v}})\\
  &\ge - C_2 (1+ ( \norm[\Lom2]{Δz-az+w}^{2} + \norm[\Lom2]{\sqrt{w}\kl{∇\ln w - v}}^2)^{θ})\\
  &= -C_2 (1+\calD^{θ}(w,z))
 \end{align*}
 for every $(w,z)\in S(m,M,B,κ)$.
\end{proof}

\section{Ensuring that solutions remain in \texorpdfstring{$S$}{S}} \label{sec:remaininS}
The crucial estimate in Lemma~\ref{lem:estimateforF} hinges on the fact that $(w,z)\in S(m,M,B,κ)$. In this section we will take care that $(w(\cdot,t),z(\cdot,t))\in S(m,M,B,κ)$ for every $t\in(0,\Tmax)$. We will focus on the component $z$ with the largest part of the work directed to the pointwise estimate $|z(x)|\le B|x|^{-κ}$. 
\begin{lemma}\label{lem:firstzestimates}
There is $C_1>0$ and for every 
  $p\in(1,\f{n}{n-1})$ there is $C_2(p)>0$ such that for any choice of $w\init, v\init, z\init$ as in \eqref{init-reg}, 
  the solution $(w,z,v)$ of \eqref{attraction-repulsion-transformed} satisfies 
\begin{equation}\label{zl1}
 \norm[\Lom1]{z(\cdot,t)}\le C_1 \kl{\norm[\Lom1]{w\init}+\norm[\Lom1]{z\init}+\norm[\Lom1]{v\init}}
\end{equation}
and 
 \begin{equation}\label{nablaz}
  \norm[\Lom p]{∇z(\cdot,t)} \le C_2(p) \kl{\norm[\Lom2]{∇z\init}+\norm[\Lom1]{v\init}+\norm[\Lom1]{w\init}}
 \end{equation}
 for every $t\in(0,\Tmax)$.
\end{lemma}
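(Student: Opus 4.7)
My plan is to view \eqref{zeq} as an inhomogeneous heat equation for $z$ with forcing $bv+w$, write it by variation of constants as
\[
  z(\cdot,t) = e^{t(\Delta - a)}z\init + \int_0^t e^{(t-s)(\Delta - a)}\kl{bv(\cdot,s) + w(\cdot,s)}\,\mathrm{d}s,
\]
and appeal to standard Neumann-heat-semigroup smoothing estimates, after first securing elementary $L^1$-controls on the forcing. Specifically, integrating \eqref{weq} over $\Omega$ and using the boundary conditions yields mass conservation $\io w(\cdot,t) = \io w\init$; and integrating \eqref{veq} produces the linear ODE $\ddt\io v = -c\io v + d\io w\init$, whose non-negative solution satisfies $\io v(\cdot,t) \le \io v\init + \f{d}{c}\io w\init$ uniformly for $t\in(0,\Tmax)$.

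For \eqref{zl1}, I would use that the Neumann heat semigroup is a contraction on $\Lom1$, so $\norm[\Lom1]{e^{\tau(\Delta-a)}f} \le e^{-a\tau}\norm[\Lom1]{f}$. Inserting this into the representation formula, integrating $e^{-a(t-s)}$ in $s$, and substituting the $L^1$-bounds on $w$ and $v$ from the previous step immediately gives \eqref{zl1}.

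For \eqref{nablaz}, I would invoke the well-known smoothing estimate (as used e.g.\ in \cite{win_bu_higherdim})
\[
  \norm[\Lom p]{∇e^{\tau(\Delta - a)}f} \le C\kl{1 + \tau^{-\f12 - \f n2(1 - \f1p)}}e^{-\lambda\tau}\norm[\Lom1]{f},\qquad \tau > 0,
\]
for some $\lambda\in(0,a)$, applied to $f = bv(\cdot,s) + w(\cdot,s)$ and integrated over $s\in(0,t)$. The decisive point -- and really the only part of the argument requiring any care -- is that the singular time factor is integrable at $s = t$ precisely when $\f12 + \f n2(1-\f1p) < 1$, i.e.\ when $p < \f{n}{n-1}$, which matches exactly the hypothesis on $p$; combined with the uniform $L^1$-bounds on the forcing, this handles the convolution term. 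The initial-datum contribution $\nabla e^{t(\Delta - a)}z\init$ I would bound by a direct energy computation on $\zeta_t = \Delta\zeta - a\zeta$ with Neumann boundary values (testing against $-\Delta\zeta$), which yields $\norm[\Lom2]{\nabla e^{t(\Delta-a)}z\init} \le e^{-at}\norm[\Lom2]{∇z\init}$; since $p<2$, the embedding $\Lom2\embeddedinto\Lom p$ then produces a bound of the form required in \eqref{nablaz}.
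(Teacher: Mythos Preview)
Your proposal is correct and follows essentially the same route as the paper: Duhamel representation of $z$ via the Neumann heat semigroup, elementary $L^1$-bounds on $w$ and $v$ from mass conservation and the ODE for $\io v$, contraction of the semigroup on $\Lom1$ for \eqref{zl1}, and the $\Lom1\to W^{1,p}$ smoothing estimate (integrable near $s=t$ exactly when $p<\tfrac{n}{n-1}$) for \eqref{nablaz}. The only cosmetic difference is that the paper invokes \cite[Lemma~1.3]{win_aggregation_vs} directly for the bound $\norm[\Lom p]{\nabla e^{t(\Delta-a)}z\init}\le c_1\norm[\Lom2]{\nabla z\init}$, whereas you obtain it by the energy computation plus $\Lom2\hookrightarrow\Lom p$; both are fine.
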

\begin{proof}
 An integration of \eqref{veq} and \eqref{weq} shows that 
 \[
  \ddt \io v = -c\io v + d \io w = - c \io v + d\io w\init \qquad \text{ in } (0,\Tmax)
 \]
and thus 
\[
 \io v(\cdot,t) \le \max\setl{\io v\init,\; \f{d}{c} \io w\init} \qquad \text{ for every } t\in(0,\Tmax), 
\]
so that by nonnegativity of $v$ and, again, mass-conservation for $w$, 
\begin{equation}\label{bv plus w}
 \norm[\Lom1]{bv(\cdot,t)+w(\cdot,t)}\le |b| \norm[\Lom1]{v\init} + \kl{ 1 + \f{|b|d}c}\norm[\Lom1]{w\init}
\end{equation}
for all $t\in(0,\Tmax)$.

On account of the representation 
\begin{equation}\label{z-duhamel}
 z(\cdot,t)=e^{t(Δ-a)} z\init + \int_0^t e^{-(t-s)a} e^{(t-s)Δ}(bv(\cdot,s)+w(\cdot,s)) \ds, \quad t\in(0,\Tmax),
\end{equation}
and the maximum principle for the heat equation we may estimate 
\begin{align*}
\norm[\Lom1]{z(\cdot,t)} &\le e^{-at} \norm[\Lom1]{z\init} + \int_0^t e^{-a(t-s)} \norm[\Lom1]{bv(\cdot,s)+w(\cdot,s)}\ds \\
&\le \norm[\Lom1]{z\init} +\f1a \sup_{s\in(0,\Tmax)} \norm[\Lom1]{bv(\cdot,s)+w(\cdot,s)}
\end{align*}
for every $t\in(0,\Tmax)$, which by \eqref{bv plus w} entails \eqref{zl1}.

Again starting from \eqref{z-duhamel}, we employ well-known estimates for the Neumann heat semigroup (see \cite[Lemma 1.3]{win_aggregation_vs}) and obtain a constant $c_1>0$ such that 
\begin{align*}
 \norm[\Lom p]{∇z(\cdot,t)} & \le c_1 \norm[\Lom 2]{∇z\init} + c_1 \int_0^t (t-s)^{-\f12-\f n2(1-\f1p)}e^{-a(t-s)} \norm[\Lom 1]{bv(\cdot,s)+w(\cdot,s)} \ds 
\end{align*}
holds for every $t\in(0,\Tmax)$ and every solution of \eqref{attraction-repulsion-transformed}. As the condition on $p$ ensures that 
\[
 c_2:=\int_0^\infty σ^{-\f12-\f n2(1-\f1p)} e^{-aσ} \dsigma < \infty, 
\]
this together with \eqref{bv plus w} implies \eqref{nablaz} if we set $C(p)=c_1+c_1c_2 (|b|+1+\f{|b|d}{c})$.
\end{proof}

In the following lemma we use the radial symmetry of solutions, writing $z(r,t)$ with $r=|x|$ in place of $z(x,t)$. The proof follows \cite[Lemma 3.2]{win_bu_higherdim} closely (where, however, additionally nonnegativity of $z$ was used). 
\begin{lemma}\label{lem:pointwise}
 Let $p\in(1,\f{n}{n-1})$. Then there exists $C(p)>0$ such that for every radially symmetric $(w\init,z\init,v\init)$ as in \eqref{init-reg}, 
 the solution of \eqref{attraction-repulsion-transformed} satisfies
 \[
  |z(r,t)| \le C(p) \kl{\norm[\Lom1]{w\init} + \norm[\Lom1]{v\init} + \norm[\Lom1]{z\init} +\norm[\Lom2]{∇z\init}} r ^{-\f{n-p}p}
 \]
 for all $(r,t)\in (0,R)\times(0,\Tmax)$.
\end{lemma}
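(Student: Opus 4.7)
The plan is to exploit radial symmetry to reduce $z(\cdot,t)$ to a function of $r=|x|$ and then recover the pointwise bound at $r$ by anchoring $z$ at some radius $\rho^*$ comparable to $R$ and propagating inward via the fundamental theorem of calculus. Both the anchor value and the propagation will be controlled by the $L^1$ and $L^p$ bounds furnished by Lemma~\ref{lem:firstzestimates}. More concretely, since
\[
 n\omega_n\int_{R/2}^R |z(\rho,t)|\rho^{n-1}\drho \le \norm[\Lom1]{z(\cdot,t)},
\]
a mean-value argument yields some $\rho^*=\rho^*(t)\in(R/2,R)$ with $|z(\rho^*,t)|\le C_R\norm[\Lom1]{z(\cdot,t)}$, where $C_R$ depends only on $R$ and $n$. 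For $r\in(0,\rho^*]$ one writes
\[
 z(r,t)=z(\rho^*,t)-\int_r^{\rho^*}\partial_\rho z(\rho,t)\drho,
\]
so the problem reduces to estimating this integral by $\norm[\Lom p]{∇z(\cdot,t)}$ times a suitable negative power of $r$.

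The key technical step is Hölder's inequality with the weight $\rho^{n-1}$ reflecting the volume element of $B_R\subset ℝ^n$:
\[
 \int_r^{\rho^*}|\partial_\rho z(\rho,t)|\drho
 \le \kl{\int_r^{\rho^*}|\partial_\rho z(\rho,t)|^p\rho^{n-1}\drho}^{\f1p}
 \kl{\int_r^{\rho^*}\rho^{-\f{n-1}{p-1}}\drho}^{\f{p-1}{p}}.
\]
The first factor is bounded by $c\norm[\Lom p]{∇z(\cdot,t)}$ for some $c=c(n)$. For the second factor, the assumption $p<\f{n}{n-1}$ is equivalent to $\f{n-1}{p-1}>1$, so the integrand is non-integrable near $0$ and a direct computation gives $\int_r^{\rho^*}\rho^{-\f{n-1}{p-1}}\drho\le\f{p-1}{n-p}r^{-\f{n-p}{p-1}}$. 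Raising this to the power $\f{p-1}{p}$ produces precisely $c'r^{-\f{n-p}{p}}$ as desired. Combining with the anchor estimate yields, for all $r\in(0,\rho^*]$,
\[
 |z(r,t)|\le C_R\norm[\Lom1]{z(\cdot,t)}+c''\norm[\Lom p]{∇z(\cdot,t)}\,r^{-\f{n-p}{p}}.
\]

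The remaining range $r\in[\rho^*,R)$ is handled similarly, integrating outward from $\rho^*$ and noting that the Hölder factor is then bounded independently of $r$; since $r\ge R/2$ there, the bound $1\le(R/2)^{\f{n-p}{p}}r^{-\f{n-p}{p}}$ allows one to subsume everything into a single estimate with the stated $r$-weight, at the cost of enlarging the constant. Finally, substituting the bounds from Lemma~\ref{lem:firstzestimates} for $\norm[\Lom1]{z(\cdot,t)}$ and $\norm[\Lom p]{∇z(\cdot,t)}$ in terms of $\norm[\Lom1]{w\init}+\norm[\Lom1]{v\init}+\norm[\Lom1]{z\init}+\norm[\Lom2]{∇z\init}$ produces the claimed inequality. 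I do not anticipate a real obstacle here, since the structure of the argument is essentially that of \cite[Lemma~3.2]{win_bu_higherdim}; the only point requiring minor care is that, unlike in that reference, we cannot assume $z\ge 0$, so all estimates have to be performed for $|z|$ and the anchor value must be chosen via the absolute value rather than via a positive sign.
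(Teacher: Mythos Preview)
Your proposal is correct and follows essentially the same route as the paper: a mean-value argument on the annulus $B_R\setminus B_{R/2}$ to anchor $|z|$ at some $\rho^*(t)\in(R/2,R)$, then the fundamental theorem of calculus combined with H\"older's inequality against the radial weight $\rho^{n-1}$, using $p<\frac{n}{n-1}$ to evaluate the singular integral and Lemma~\ref{lem:firstzestimates} to bound the resulting norms. The only cosmetic difference is that the paper absorbs both ranges $r\lessgtr\rho^*$ into a single estimate by keeping absolute values on the integrals throughout, whereas you split into two cases; either way the argument is the same, including your remark on handling $|z|$ rather than $z$ in the absence of a sign condition.
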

\begin{proof}
If we set $K=C_1\kl{\norm[\Lom1]{w\init}+\norm[\Lom1]{z\init}+\norm[\Lom1]{v\init}}$ with $C_1$ from Lemma~\ref{lem:firstzestimates}, we know from said lemma that 
\[
 \int_{B_R\setminus B_{\f R2}} |z(\cdot,t)| \le \io |z(\cdot,t)| \le K
\]
for every $t\in(0,\Tmax)$, and thus for every $t\in(0,\Tmax)$ can find $r_0(t)\in(\f{R}2,R)$ such that 
\[
 |z(r_0(t),t)|\le \f{K}{|B_R\setminus B_{\f R2}|}.
\]
In particular, by Hölder's inequality and Lemma~\ref{lem:firstzestimates} (with $C_2(p)$ taken from the latter),
\begin{align*}
 |z(r,t)|-\f{K}{|B_R\setminus B_{\f R2}|} R^{\f{n-p}p} r^{-\f{n-p}{p}}
&\le |z(r,t)| - |z(r_0(t),t)| \le |z(r,t)-z(r_0,t)| \\
 &\le \left\lvert \int_{r_0(t)}^{r} z_r (ρ,t)\drho\right\rvert\\
  &\le 
  \left\lvert 
  \int_{r_0(t)}^{r} ρ^{n-1} |z_r(ρ,t)|^p\drho 
  \right\rvert^{\f1{p}}
  \left\lvert \int_{r_0(t)}^{r} ρ^{-\f{n-1}{p-1}} \drho\right\rvert^{\f{p-1}p}\\
  &\le C_2(p)  \kl{\norm[\Lom2]{∇z\init}+\norm[\Lom1]{v\init}+\norm[\Lom1]{w\init}}
  \left\lvert \int_{r_0(t)}^{r} ρ^{-\f{n-1}{p-1}} \drho\right\rvert^{\f{p-1}p}
\end{align*}
for every $(r,t)\in(0,R)\times(0,\Tmax)$. Since 
\[
 \left\lvert \int_{r_0(t)}^{r} ρ^{-\f{n-1}{p-1}} \drho\right\rvert^{\f{p-1}p}\le 2^{\f{n-p}p}\kl{\f{p-1}{n-p}}^{\f{p-1}p}  r^{-\f{n-p}p}
\]
for every $r\in(0,R)$ and $t\in(0,\Tmax)$ (cf. \cite[(3.6), (3.7)]{win_bu_higherdim}) due to $r_0(t)>\f R2$, the lemma follows.
\end{proof}

In the same way as in \cite[Cor. 3.3]{win_bu_higherdim}, we summarize these results: 
\begin{cor}\label{cor z}
For every $κ>n-2$ there is $C(κ)>0$ such that for all radially symmetric $(w\init,z\init,v\init)$ as in \eqref{init-reg}, the solution of \eqref{attraction-repulsion-transformed} satisfies
 \[
  |z(r,t)| \le C(κ) \kl{\norm[\Lom1]{w\init} + \norm[\Lom1]{v\init} + \norm[\Lom1]{z\init} +\norm[\Lom2]{∇z\init}} r^{-κ}
 \]
 for all $(r,t)\in (0,R)\times(0,\Tmax)$.
\end{cor}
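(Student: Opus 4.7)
The plan is to deduce the corollary as a direct consequence of Lemma~\ref{lem:pointwise}. That lemma furnishes a bound of the form $|z(r,t)| \le C(p) (\cdots) r^{-(n-p)/p}$ for any $p \in (1, n/(n-1))$, so the task is to choose $p$ so that the resulting decay exponent either matches $\kappa$ exactly or can be upgraded to $r^{-\kappa}$.

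A short computation shows that the map $p \mapsto (n-p)/p = n/p - 1$ sends $(1, n/(n-1))$ bijectively onto $(n-2, n-1)$. Accordingly, for $\kappa \in (n-2, n-1)$ I would simply take $p := n/(\kappa+1) \in (1, n/(n-1))$, which yields $(n-p)/p = \kappa$, and the claim follows with $C(\kappa) := C(p)$ from Lemma~\ref{lem:pointwise}.

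For $\kappa \ge n-1$ the exponent $\kappa$ is not attainable in this way, but this is harmless since $\Omega$ is bounded: I would pick any $\tilde\kappa \in (n-2, n-1)$ together with the corresponding $p := n/(\tilde\kappa + 1)$, apply Lemma~\ref{lem:pointwise} to obtain a bound of order $r^{-\tilde\kappa}$, and then use $r \in (0,R)$ to write $r^{-\tilde\kappa} = r^{\kappa-\tilde\kappa} r^{-\kappa} \le R^{\kappa-\tilde\kappa} r^{-\kappa}$, absorbing $R^{\kappa-\tilde\kappa}$ into the final constant.

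No genuine difficulty arises at this step; the corollary is really just a convenient reformulation of Lemma~\ref{lem:pointwise} adapted to the range of exponents $\kappa > n-2$ that appears in the definition \eqref{defS} of the set $S(m,M,B,\kappa)$ and in Lemma~\ref{lem:estimateforF}.
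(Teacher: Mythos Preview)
Your argument is correct and follows the same idea as the paper: deduce the corollary from Lemma~\ref{lem:pointwise} by choosing an admissible $p$ and, where needed, absorbing a power of $R$ into the constant. The paper avoids your case distinction by directly picking any $p>1$ with $p\in(\frac{n}{\kappa+1},\frac{n}{n-1})$, which yields $(n-p)/p<\kappa$ and hence $r^{-(n-p)/p}\le R^{\kappa-(n-p)/p}r^{-\kappa}$ in one stroke, but this is only a minor streamlining of exactly what you did.
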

\begin{proof}
 This follows upon an application of Lemma~\ref{lem:pointwise} to some $p>1$ fulfilling $p\in(\f{n}{κ+1},\f{n}{n-1})$.
\end{proof}

\begin{lemma}\label{lem:remaininset}
 Let $m>0$ and $A>0$ and $κ>n-2$. Then there are $M>0$ and $B>0$ such that for every choice of radially symmetric $(w\init,z\init,v\init)$ as in \eqref{init-reg} and with 
 \[
  \io w\init = m, \qquad \norm[\Lom1]{v\init}\le A, \qquad \norm[\Lom1]{z\init}\le A, \qquad \norm[\Lom2]{∇z\init}\le A
 \]
 the solution $(w,z,v)$ of \eqref{attraction-repulsion-transformed} satisfies 
 \[
  (w(\cdot,t),z(\cdot,t))\in S(m,M,B,κ) \qquad \text{ for every } t\in(0,\Tmax).
 \]

\end{lemma}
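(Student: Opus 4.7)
The plan is essentially to unpack the definition of $S(m,M,B,κ)$ and verify each requirement by quoting the corresponding result from Sections~\ref{sec:locex}--\ref{sec:remaininS}. From Lemma~\ref{lem:ex} I immediately obtain radial symmetry of $w(\cdot,t)$ and $z(\cdot,t)$, the regularity $w(\cdot,t)\in C^1(\Ombar)$ and $z(\cdot,t)\in C^2(\Ombar)$ (in fact both are $C^{2,1}$), and the nonnegativity of $w$; the Neumann condition $∂_{ν}z\bdry=0$ is part of \eqref{trafo.bc}. Mass conservation $\io w(\cdot,t)=m$ follows by integrating \eqref{weq} in space and using the boundary condition. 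Strict positivity of $w$ on $\Omega\times(0,\Tmax)$ can then be obtained from the strong maximum principle applied to the linear parabolic equation $w_t=Δw-∇w\cdot∇z-wΔz$, whose coefficients are smooth on any subinterval of $(0,\Tmax)$, together with the fact that $w\init\not\equiv 0$ (since $\io w\init=m>0$).

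What remains are the two quantitative conditions on $z$. For the $L^1$-bound I would invoke \eqref{zl1} of Lemma~\ref{lem:firstzestimates}: under the assumptions $\norm[\Lom1]{w\init}=m$, $\norm[\Lom1]{z\init}\le A$ and $\norm[\Lom1]{v\init}\le A$ one reads off
\[
 \norm[\Lom1]{z(\cdot,t)}\le C_1(m+2A)\qquad\text{for every }t\in(0,\Tmax),
\]
so that setting $M:=C_1(m+2A)$ is sufficient. For the pointwise decay I would apply Corollary~\ref{cor z} with the prescribed $κ>n-2$: the hypotheses ensure that
\[
 |z(r,t)|\le C(κ)\bigl(\norm[\Lom1]{w\init}+\norm[\Lom1]{v\init}+\norm[\Lom1]{z\init}+\norm[\Lom2]{∇z\init}\bigr)r^{-κ}\le C(κ)(m+3A)r^{-κ}
\]
for all $(r,t)\in(0,R)\times(0,\Tmax)$, so $B:=C(κ)(m+3A)$ closes the argument. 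Both $M$ and $B$ depend only on $m$, $A$, $κ$ and on the fixed system parameters, but not on the particular initial data—which is exactly the uniformity claimed in the statement.

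There is no genuine analytic obstacle at this stage: the real work has been front-loaded into Lemmata~\ref{lem:firstzestimates}--\ref{lem:pointwise} and Corollary~\ref{cor z}, whose estimates were deliberately formulated so as to make the dependence on $w\init,z\init,v\init$ quantitatively transparent. The only conceptual point worth emphasising in the write-up is that although $A$ controls only $\norm[\Lom1]{v\init}$ (not higher norms of $v\init$), the evolution equation for $v$ enters the estimates on $z$ exclusively through the $L^1$-bound on $bv+w$, so this level of control is indeed enough.
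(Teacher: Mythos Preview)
Your proposal is correct and follows the same approach as the paper's proof, which is a one-line reference to mass conservation, \eqref{zl1}, and Corollary~\ref{cor z}; you have simply expanded the details (regularity, radial symmetry, strict positivity of $w$, the Neumann condition) that the paper leaves implicit.
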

\begin{proof}
 This is a consequence of mass-conservation of $w$, \eqref{zl1} (indicating a suitable choice of $M$) and Corollary~\ref{cor z} (yielding $B$).
\end{proof}

\section{The time-uniform bound for \texorpdfstring{$v$}{v} in \texorpdfstring{$L^2(\Omega)$}{L²(Ω)}} \label{sec:vbound}
As discussed in the introduction, here we give an estimate for the seemingly inconvenient term in \eqref{estimatebadterm}:
\begin{lemma}\label{lem:vbounded-l2}
 Let $n=3$, $A>0$ and $m>0$. Then there is $C=C(A,m)>0$ such that whenever $\wzero$ satisfies \eqref{init-reg} and $\io \wzero\le m$ and $\vzero$ is such that $\norm[\Lom2]{\vzero}\le A$, then any solution of \eqref{attraction-repulsion-transformed} satisfies 
 \begin{equation}\label{vbounded-l2}
  \norm[\Lom2]{v(\cdot,t)} \le C \qquad \text{for all } t\in(0,\Tmax).
 \end{equation}
\end{lemma}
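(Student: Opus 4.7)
The plan is to write the $v$-equation in its mild (Duhamel) form and control it by Neumann heat semigroup smoothing estimates, exploiting the $L^1$-bound on $w$ coming from mass conservation.

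Starting from \eqref{veq} together with the Neumann boundary condition, the variation-of-constants formula gives
\[
 v(\cdot,t) = e^{-ct} e^{t\Delta} v\init + d \int_0^t e^{-c(t-s)} e^{(t-s)\Delta} w(\cdot,s)\, \ds, \qquad t\in(0,\Tmax),
\]
where $(e^{t\Delta})_{t\ge 0}$ denotes the Neumann heat semigroup on $\Om$. I would then take the $\Lom 2$-norm on both sides, estimating the first summand trivially by $\norm[\Lom 2]{v\init}\le A$ (using that $e^{t\Delta}$ is a contraction on $\Lom 2$) and the second summand by the standard $L^1$–$L^2$ smoothing estimate for the Neumann semigroup (as recorded, e.g., in \cite[Lemma 1.3]{win_aggregation_vs}): there is $c_1>0$ with
\[
 \norm[\Lom 2]{e^{σ\Delta} \varphi} \le c_1\kl{1 + σ^{-\f n4}}\norm[\Lom 1]{\varphi} \qquad \text{for all } σ>0, \varphi\in \Lom1.
\]

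The crucial input is then the conservation of mass for $w$, which follows from integrating \eqref{weq} and using the no-flux boundary condition: $\io w(\cdot,s)= \io w\init \le m$ for every $s\in(0,\Tmax)$. Substituting yields
\[
 \norm[\Lom 2]{v(\cdot,t)} \le A + dmc_1 \int_0^t \kl{1 + (t-s)^{-\f n4}} e^{-c(t-s)}\, \ds \le A + dmc_1\int_0^\infty (1+σ^{-\f n4}) e^{-cσ}\, \dsigma.
\]
Here is where the restriction $n=3$ enters in an essential way: the exponent $\f n4=\f34$ is strictly less than $1$, so the singular factor $σ^{-3/4}$ is integrable near $0$, while the exponential factor $e^{-cσ}$ handles integrability at infinity. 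The resulting constant depends only on $A$, $m$ and the fixed parameters, and \eqref{vbounded-l2} follows.

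\textbf{Main obstacle.} The argument itself is short; the only step requiring attention is ensuring that the smoothing estimate is available in the form above and that the time-integral converges. This is exactly the reason the dimensional restriction is imposed: for $n\ge 4$ the exponent $\f n4\ge 1$ and the same elementary convolution estimate no longer yields a finite bound, so any generalization would require either a higher integrability assumption on $w$ (which is unavailable a priori here) or a different strategy entirely.
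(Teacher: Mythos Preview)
Your proof is correct and follows essentially the same approach as the paper: both use the Duhamel representation for $v$, the $L^1$--$L^2$ smoothing estimate for the Neumann heat semigroup from \cite[Lemma~1.3]{win_aggregation_vs}, mass conservation for $w$, and the finiteness of $\int_0^\infty \sigma^{-3/4}e^{-c\sigma}\,\mathrm{d}\sigma$ (which is precisely where $n=3$ enters). The only cosmetic difference is that you explicitly carry the coefficient $d$ and phrase the semigroup estimate with $(1+\sigma^{-n/4})$ rather than $(1+\sigma)^{-n/4}$, but the argument is the same.
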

\begin{proof}
 Representing $v$ by means of the Neumann heat semigroup, we find that according to \cite[Lemma 1.3]{win_aggregation_vs} there are $c_1, c_2>0$ such that for every $t\in(0,\Tmax)$, 
 \begin{align*}
  \norm[\Lom2]{v(\cdot,t)} & = \norm[\Lom2]{e^{t(Δ-c)} \vzero + \int_0^t e^{(t-s)(Δ-c)} w(\cdot,s) \ds}\\
  &\le c_1 e^{-ct} \norm[\Lom2]{\vzero} + c_2\int_0^t e^{-c(t-s)} (1+(t-s))^{-\f32(1-\f12)} \norm[\Lom1]{w(\cdot,s)} \ds.
 \end{align*}
 Since $\int_0^\infty e^{-cσ}σ^{-\f34} \dsigma$ is finite and $\norm[\Lom1]{w(\cdot,s)}=\norm[\Lom1]{\wzero}$ for every $s\in(0,\Tmax)$, this shows \eqref{vbounded-l2}.
\end{proof}

\section{The differential inequality}\label{sec:diffineq}
This section is dedicated to the differential inequality, from which the blow-up arises. Let us first summarize the outcome of the corresponding discussion in the introduction: 

\begin{lemma}\label{lem:diffineq-conditional}
 Let $(w\init,z\init,v\init)$ be as in \eqref{init-reg}. Then every solution $(w,z,v)$ of \eqref{attraction-repulsion-transformed} fulfilling 
 \begin{equation}\label{assumedbound}
  \norm[\Lom2]{v(t)} \le K 
  \qquad \text{ for all } t\in (0,\Tmax)
 \end{equation}
 satisfies 
 \begin{equation}\label{ddtF}
 \ddt \calF(w,z) + \f12\calD(w,z) \le  \f{b^2K^2}{2} \qquad \text{ in } (0,\Tmax).
 \end{equation}
\end{lemma}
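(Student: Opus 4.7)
The proof is essentially a formalization of the computation already carried out in the introduction (leading to \eqref{energyinequality} and \eqref{estimatebadterm}), now with the concrete bound $\|v(\cdot,t)\|_{L^2(\Omega)}\le K$ plugged in. The plan is as follows.

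First I would differentiate $\calF(w,z)$ as defined in \eqref{def:F} and substitute $w_t$ and $z_t$ from \eqref{weq} and \eqref{zeq}. Using that $\partial_\nu w=\partial_\nu z=0$ on $\partial\Omega$ and that $(w,z,v)\in C^{2,1}(\Ombar\times(0,\Tmax))$ by Lemma~\ref{lem:ex}, integration by parts is justified. The term $\io w_t\ln w$ yields, after rewriting $w_t=\nabla\cdot(w\nabla(\ln w -z))$, a contribution $-\io w|\nabla(\ln w -z)|^2 + \io w_t z$, so that the $\io w_t z$ cancels the derivative of $-\io wz$ up to a remaining $-\io w z_t$. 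Collecting the $z$-terms through integration by parts of $\io \nabla z\cdot\nabla z_t$ produces $\io z_t(az -\Delta z)$ and combines with $-\io w z_t$ to give $\io z_t(az-\Delta z -w)$. Substituting $z_t=\Delta z - az + bv + w = -(az-\Delta z -w) + bv$ and rearranging produces exactly the identity stated in \eqref{energyinequality}:
\[
 \ddt \calF(w,z) + \calD(w,z) = b\io v(az-\Delta z + w),
\]
with $\calD$ as in \eqref{def:D}.

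Next I would estimate the right-hand side by Young's inequality:
\[
 b\io v(az-\Delta z + w) \le \f12 \io (az-\Delta z + w)^2 + \f{b^2}{2}\io v^2 \le \f12 \calD(w,z) + \f{b^2}{2}\io v^2,
\]
where the last step uses the definition of $\calD$ (dropping the nonnegative term $\io w|\nabla(\ln w - z)|^2$). Invoking the assumed bound \eqref{assumedbound} gives $\f{b^2}{2}\io v^2 \le \f{b^2 K^2}{2}$, and combining with the identity above immediately yields \eqref{ddtF}.

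I do not expect any real obstacle here: the regularity asserted by Lemma~\ref{lem:ex}, the homogeneous Neumann boundary conditions \eqref{trafo.bc}, and the positivity $w>0$ on $(0,\Tmax)$ (which makes $\ln w$ smooth) together ensure that every integration by parts and every manipulation of $\ln w$ is legitimate. The only mildly delicate point is verifying the exact cancellation in the computation of $\ddt\calF$ to arrive at \eqref{energyinequality}, but this is precisely the calculation already displayed in the introduction.
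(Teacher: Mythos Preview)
Your proposal is correct and takes exactly the same route as the paper: the paper's own proof simply cites the identity \eqref{energyinequality} and the Young-inequality estimate \eqref{estimatebadterm} already derived in the introduction, then inserts the assumed bound \eqref{assumedbound}, which is precisely what you spell out in slightly more detail.
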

\begin{proof}
By \eqref{energyinequality} and \eqref{estimatebadterm} together with \eqref{assumedbound}, we directly obtain that 
 \begin{align*}
 \ddt \calF(w,z) + \calD(w,z) 
 &\le \f12 \calD(w,z) + \f{b^2K^2}{2}
 \end{align*}
 and hence \eqref{ddtF}.
\end{proof}

Thanks to Lemma~\ref{lem:vbounded-l2}, this statement can be transformed to pose conditions on the solution at the initial time only. If additionally combined with the outcomes of Section~\ref{sec:superlinearestimate} and Section~\ref{sec:remaininS}, we obtain the following: 

\begin{lemma}\label{lem:diffineq-final}
 Let $n=3$, $m>0$ and $A>0$. Then there are $θ\in(\f12,1)$, $C_1>0$, $C_2>0$ and $C_3>0$ such that for every choice of radially symmetric $(w\init,z\init,v\init)$ as in \eqref{init-reg} 
with 
 \begin{align*}
  \io w\init &= m, &\qquad \norm[\Lom1]{v\init}&\le A, &\qquad \norm[\Lom1]{z\init}&\le A, \qquad \\
  &&\norm[\Lom2]{v\init}&\le A,&  \norm[\Lom2]{∇z\init}&\le A
 \end{align*}
 the solution $(w,z,v)$ of \eqref{attraction-repulsion-transformed} satisfies 
 \begin{equation}\label{diffineq}
  \ddt \kl{-\f1{C_1}\calF(w,z)-1 } \ge C_2 \kl{-\f1{C_1}\calF(w,z)-1 }_+^{\f1{θ}}-C_3 \qquad \text{ in } (0,\Tmax).
 \end{equation}
\end{lemma}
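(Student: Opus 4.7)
The plan is to combine four ingredients from the previous sections: the conditional differential inequality of Lemma~\ref{lem:diffineq-conditional}, the time-uniform $L^2$-bound on $v$ from Lemma~\ref{lem:vbounded-l2}, the inclusion $(w,z) \in S(m,M,B,κ)$ guaranteed by Lemma~\ref{lem:remaininset}, and the superlinear estimate from Lemma~\ref{lem:estimateforF}. None of these steps should present a real obstacle, since the heavy lifting has already been done; the task is essentially bookkeeping of constants.

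First, I fix $κ > n-2 = 1$. Applying Lemma~\ref{lem:vbounded-l2} with the assumed bound $\norm[\Lom2]{v\init} \le A$ and $\io w\init = m$ produces a constant $K = K(A,m) > 0$ such that $\norm[\Lom2]{v(\cdot,t)} \le K$ for every $t \in (0,\Tmax)$. Lemma~\ref{lem:diffineq-conditional} then yields
\begin{equation*}
 \ddt \calF(w,z) + \tfrac12 \calD(w,z) \le \tfrac{b^2K^2}{2} \qquad \text{in } (0,\Tmax).
\end{equation*}

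Next, I apply Lemma~\ref{lem:remaininset} with the prescribed bounds on $v\init, z\init, \nabla z\init$ in $L^1$ resp.\ $L^2$ to obtain numbers $M = M(m,A)$ and $B = B(m,A,κ)$ such that $(w(\cdot,t),z(\cdot,t)) \in S(m,M,B,κ)$ for all $t \in (0,\Tmax)$. Lemma~\ref{lem:estimateforF} then provides $θ \in (\f12,1)$ and $C = C(m,M,B,κ) > 0$ for which
\begin{equation*}
 \calF(w,z) \ge -C\bigl(\calD^{θ}(w,z)+1\bigr).
\end{equation*}
Rearranging, $(-\tfrac{1}{C}\calF(w,z)-1)_+ \le \calD^{θ}(w,z)$, and raising to the power $\f1{θ}$ gives
\begin{equation*}
 \calD(w,z) \ge \bigl(-\tfrac{1}{C}\calF(w,z)-1\bigr)_+^{1/θ}.
\end{equation*}

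Finally, I combine the two displayed inequalities: dividing the differential inequality by $-C$ (so as to introduce $-\calF/C$ on the left) gives
\begin{equation*}
 \ddt \bigl(-\tfrac{1}{C}\calF(w,z) - 1\bigr) \ge \tfrac{1}{2C} \calD(w,z) - \tfrac{b^2 K^2}{2C} \ge \tfrac{1}{2C} \bigl(-\tfrac{1}{C}\calF(w,z) - 1\bigr)_+^{1/θ} - \tfrac{b^2 K^2}{2C}.
\end{equation*}
Setting $C_1 := C$, $C_2 := \tfrac{1}{2C}$, $C_3 := \tfrac{b^2K^2}{2C}$ yields \eqref{diffineq}. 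The exponent $θ$ inherited from Lemma~\ref{lem:estimateforF} lies in $(\f12,1)$, so $\f1{θ} \in (1,2)$ is indeed a superlinear power, which is what will drive the blow-up argument in the next section. The only point requiring mild care is that the constants in Lemma~\ref{lem:estimateforF} depend on $M$ and $B$, which in turn depend only on $m$ and $A$ through Lemma~\ref{lem:remaininset} and our fixed choice of $κ$, so the three constants $C_1,C_2,C_3$ depend only on $m$ and $A$ (plus the system parameters, which are globally fixed).
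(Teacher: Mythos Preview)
Your proof is correct and follows essentially the same approach as the paper: fix $κ>n-2$, invoke Lemma~\ref{lem:remaininset} and Lemma~\ref{lem:estimateforF} to get $\calD\ge(-\tfrac1C\calF-1)_+^{1/θ}$, invoke Lemma~\ref{lem:vbounded-l2} to feed Lemma~\ref{lem:diffineq-conditional}, and combine. The only difference is the order in which you apply the lemmas and the explicit naming of the final constants, both of which are cosmetic.
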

\begin{proof}
 We let $κ>n-2$ and according to Lemma~\ref{lem:remaininset}, we can find $M$, $B$ such that every solution emanating from initial data as in the lemma satisfies  $(w(\cdot,t),z(\cdot,t))\in S(m,M,B,κ)$ for every $t\in(0,\Tmax)$. Thus Lemma~\ref{lem:estimateforF} asserts the existence of some constant $c_1>0$ such that with $θ\in(\f12,1)$ as in Lemma~\ref{lem:estimateforF}
 \[
  \calF(w,z)\ge -c_1(\calD^{θ}(w,z)+1),  
 \]
 and hence 
 \[
  \calD(w,z)\ge \kl{-\f1{c_1}\calF(w,z)-1}_+^{\f1{θ}}
  \qquad \text{ throughout } (0,\Tmax)
 \] 
 for every such solution.  As Lemma~\ref{lem:vbounded-l2} ensures that with some $c_2=c_2(A,m)>0$, 
 \[
  \norm[\Lom2]{v(\cdot,t)}\le c_2 \qquad \text{ for every } t\in (0,\Tmax),
 \]
 Lemma~\ref{lem:diffineq-conditional} becomes applicable and shows that with $c_3=\f{b^2c_2^2}{2}$
 \[
   \ddt \calF(w,z)  \le  c_3 - \f12\calD(w,z) \qquad \text{ in } (0,\Tmax)
 \]
 and therefore 
 \[
  \ddt \kl{-\f1{c_1}\calF(w,z)-1} \ge - \f{c_3}{c_1} + \f1{2} \kl{-\f1{c_1}\calF(w,z)-1}_+^{\f1{θ}} \qquad \text{ in } (0,\Tmax).
  \qedhere
 \]
\end{proof}
In terms of blow-up, we can conclude the following from Lemma~\ref{lem:diffineq-final}:
\begin{lemma}\label{lem:bu}
 Let $n=3$, $m>0$ and $A>0$. Then there is $K>0$ such that for every choice of radially symmetric $(w\init,z\init,v\init)$ as in \eqref{init-reg} 
with 
 \begin{align*}
  \io w\init = m, &\qquad \norm[\Lom1]{v\init}\le A, \qquad \norm[\Lom1]{z\init}\le A,\\& \qquad ,\norm[\Lom2]{v\init}\le A \qquad \norm[\Lom2]{∇z\init}\le A
 \end{align*}
 and 
 \[
  \calF(w\init,z\init)< - K
 \]
 the solution $(w,z,v)$ of \eqref{attraction-repulsion-transformed} blows up in finite time.
\end{lemma}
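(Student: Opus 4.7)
The strategy is to read Lemma~\ref{lem:diffineq-final} as a scalar ODI and apply the classical superlinear blow-up comparison, absorbing the inhomogeneous constant $C_3$ into a sufficiently large choice of $K$ thanks to the exponent $\f1θ>1$.

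Setting $y(t):=-\f1{C_1}\calF(w(\cdot,t),z(\cdot,t))-1$, Lemma~\ref{lem:diffineq-final} becomes
\[
 y'(t) \ge C_2\,y(t)_+^{1/θ} - C_3 \qquad \text{on } (0,\Tmax).
\]
I would then fix $y_*>0$ with $\f{C_2}2 y_*^{1/θ}\ge C_3$ and put $K:=C_1(y_*+1)$, so that $\calF(w\init,z\init)<-K$ translates to $y(0)>y_*$ and the elementary inequality $C_2 y^{1/θ}-C_3 \ge \f{C_2}2 y^{1/θ}$ holds at every moment at which $y\ge y_*$. A first-crossing argument — if $y$ ever descended to $y_*$, at that time one would have $y'\ge\f{C_2}2 y_*^{1/θ}>0$, contradicting the descent — then keeps $y(t)>y_*$ throughout $(0,\Tmax)$, sharpening the inequality to the autonomous ODI $y'(t)\ge \f{C_2}2 y(t)^{1/θ}$.

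Because $\f1θ>1$, the standard separation-of-variables comparison for this ODI (integrating $\ddt y^{1-1/θ}$ and using $1-\f1θ<0$) shows that any subsolution with initial value $y(0)>y_*$ diverges to $\infty$ at or before an explicit time $\tau<\infty$ depending only on $y(0)$, $C_2$ and $θ$. On the other hand, as long as $\Tmax>\tau$, the triple $(w,z,v)$ would be classical on $\Ombar\times[0,\tau]$, keeping $\calF(w(\cdot,t),z(\cdot,t))$ — and hence $y(t)$ — finite on that interval; this contradicts divergence at $\tau$. Therefore $\Tmax\le\tau<\infty$, and the extensibility dichotomy in Lemma~\ref{lem:ex} upgrades this to $\limsup_{t\nearrow\Tmax}\norm[\Lom\infty]{w(\cdot,t)}=\infty$.

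The only genuinely delicate step is the initial calibration of $K$, which is precisely where the superlinearity $\f1θ>1$ becomes indispensable: it lets one fix a threshold $y_*$ past which the $-C_3$ term is dominated once and for all, reducing the argument to the textbook blow-up statement for $y'\ge c\,y^{1/θ}$. Everything else is routine ODE comparison together with Lemma~\ref{lem:ex}.
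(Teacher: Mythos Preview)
Your proof is correct and follows the same approach as the paper: the paper likewise sets $y(t)=-\f1{C_1}\calF(w,z)-1$, chooses $K=C_1\big((C_3/C_2)^{θ}+1\big)$ so that $C_2 y_+^{1/θ}(0)-C_3>0$, and then appeals (more tersely than you do) to the standard superlinear ODI blow-up. Your first-crossing argument and explicit separation of variables simply spell out what the paper compresses into the sentence ``Hence $y$ \dots\ must blow up in finite time''; the only cosmetic difference is that you absorb $C_3$ by halving $C_2$ (threshold $y_*\ge(2C_3/C_2)^{θ}$) whereas the paper uses the exact threshold $(C_3/C_2)^{θ}$.
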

\begin{proof}
 Given $m$ and $A$, we let $C_1, C_2, C_3$ be as provided by Lemma~\ref{lem:diffineq-final}. Setting $K=C_1\kl{\kl{\f{C_3}{C_2}}^{θ}+1}$, we conclude from \eqref{diffineq} that $y(t)=-\f1{C_1}\calF(w(\cdot,t),z(\cdot,t))-1$ satisfies 
 the superlinear differential inequality \[y'\ge C_2 y_+^{\f1{θ}}-C_3\] together with $y(0)> \f1{C_1} K - 1$, which ensures that $C_2y_+^{\f1{θ}}(0)- C_3 >0$. Hence $y$ (therefore $\calF(w,z)$ and thus $w$) must blow up in finite time. 
\end{proof}

\section{Preparing initial data}\label{sec:initdata}

It is well-known that $\calF$ is unbounded from below (even on each set of pairs of functions with fixed $L^1$-norm of the first component). We recall the corresponding result: 
\begin{lemma}\label{lem:Funbounded}
 Let $n\ge3$, $m>0$ and $w\in C(\Ombar)$ and $z\in \Wom1{∞}$ be radially symmetric and let $w$ be positive with $\io w=m$. Then for each $p\in (1,\f{2n}{n+2})$ there are sequences $(w^{(k)})_{k\inℕ}\subset C(\Ombar)$ and $(z^{(k)})_{k\inℕ}\subset\Wom1{∞}$ of radially symmetric functions such that $w^{(k)}$ is positive and satisfies $\io w^{(k)}=m$ for every $k\inℕ$ and such that 
 \[
  w^{(k)}\to w \;\text{ in } \Lom p \quad \text{and}\quad z^{(k)}\to z \text{ in } \Wom12 \qquad\text{as } k\to\infty
 \]
 but such that with $\calF$ from \eqref{def:F} 
 \[
  \calF(w^{(k)},z^{(k)})\to -∞ \qquad \text{as } k\to\infty.
 \]
\end{lemma}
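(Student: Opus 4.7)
The plan is to perturb $(w,z)$ by a pair of concentrating radial spikes at the origin, together with a small mass-correction that keeps $\io w^{(k)}=m$. Concretely, fix a radial cutoff $\varphi\in C_c^\infty(B_1(0))$ with $\io\varphi>0$ and set $\eta_k(x):=H_k\varphi(x/\rho_k)$ and $\zeta_k(x):=Z_k\varphi(x/\rho_k)$, for parameters $H_k,Z_k>0$ and $\rho_k\downarrow 0$ to be chosen; let $\phi_k$ be a smooth radial nonnegative function supported in $B_R\setminus B_{R/2}$ with $\io\phi_k=\io\eta_k$ and $\norm[\Lom\infty]{\phi_k}\to 0$. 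Defining $w^{(k)}:=w+\eta_k-\phi_k$ and $z^{(k)}:=z+\zeta_k$ gives $\io w^{(k)}=m$ and, since $w>0$ on $\Ombar$ and $\eta_k\ge 0$, also $w^{(k)}>0$ for all large $k$; radial symmetry and the regularity demanded by \eqref{init-reg} are clear.

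Setting $\mu_k:=\io\eta_k\sim H_k\rho_k^n$, rescaling gives $\norm[\Lom p]{\eta_k}^p\sim H_k^p\rho_k^n$, $\norm[\Lom 2]{\zeta_k}^2\sim Z_k^2\rho_k^n$, $\norm[\Lom 2]{\nabla\zeta_k}^2\sim Z_k^2\rho_k^{n-2}$, and the interaction $\io\eta_k\zeta_k\sim H_kZ_k\rho_k^n=\mu_kZ_k$. The parameters must then be chosen so that simultaneously $\norm[\Lom p]{\eta_k}\to 0$ and $\zeta_k\to 0$ in $\Wom12$, yet $\mu_kZ_k\to\infty$. The first two conditions require $\mu_k\ll\rho_k^{n(p-1)/p}$ and $Z_k\ll\rho_k^{-(n-2)/2}$, and their product accordingly satisfies $\mu_kZ_k\ll\rho_k^{n(p-1)/p-(n-2)/2}$. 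This upper bound diverges as $\rho_k\to 0$ precisely when $\f{n(p-1)}p-\f{n-2}2<0$, i.e.\ $p<\f{2n}{n+2}$, which is exactly the hypothesis of the lemma. A concrete admissible choice is $\rho_k:=1/k$, $\mu_k:=\rho_k^{n(p-1)/p+\sigma}$, $Z_k:=\rho_k^{-(n-2)/2+\sigma}$ for a sufficiently small $\sigma>0$.

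Convergence $w^{(k)}\to w$ in $\Lom p$ and $z^{(k)}\to z$ in $\Wom12$ is then immediate from the above rates (the contribution of $\phi_k$ is negligible in every $L^q$-norm). For the energy I would expand $\calF(w^{(k)},z^{(k)})$ around $(w,z)$: the quadratic pieces in $z^{(k)}$ differ from the baseline only by terms controlled by $\norm[\Wom12]{\zeta_k}\to 0$; the spike entropy contribution is $\io\eta_k\ln\eta_k\sim \mu_k\ln(\mu_k/\rho_k^n)=O(\mu_k|\ln\rho_k|)\to 0$, and the $\phi_k$-part of the entropy vanishes under $\norm[\Lom\infty]{\phi_k}\to 0$; the linear cross terms $-\io\eta_k z$ and $-\io w\zeta_k$ are of order $\mu_k$ and $Z_k\rho_k^n$ respectively, both vanishing. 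The sole surviving divergent term is $-\io\eta_k\zeta_k\sim-c_\varphi\mu_kZ_k\to-\infty$, which forces $\calF(w^{(k)},z^{(k)})\to-\infty$.

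The main obstacle is really only the exponent arithmetic identifying $p<\f{2n}{n+2}$ as the exact window in which $\Lom p$-smallness of the mass spike, $\Wom12$-smallness of the potential spike, and divergence of their coupling can be arranged together. Past this balance, the remainder is careful but routine bookkeeping of the various cross and correction terms, and the check that $\phi_k$ can be chosen so as to preserve positivity of $w^{(k)}$ without affecting the above limits.
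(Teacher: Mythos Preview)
Your construction is correct and is precisely the concentrating-spike argument that the paper invokes by citing \cite[Lemma~6.1]{win_bu_higherdim}; the paper gives no independent proof of its own. The scaling analysis identifying $p<\tfrac{2n}{n+2}$ as the threshold, the mass correction $\phi_k$ away from the origin, and the bookkeeping of the remainder terms all match the standard argument, so there is nothing to add.
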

\begin{proof}
 This is can be shown by the construction in \cite[Lemma 6.1]{win_bu_higherdim} (see also \cite{lankeit_itbu}). 
\end{proof}

\section{Proof of Theorem~\ref{thm}}\label{sec:proof}
In order to prove Theorem~\ref{thm}, we mainly have to combine Lemma~\ref{lem:diffineq-final} with Lemma~\ref{lem:Funbounded}. Additionally, we return to the variables of the original system \eqref{attraction-repulsion}.

\begin{proof}[Proof of Theorem~\ref{thm}]
 In line with \eqref{trafo:init}, we define 
 \[
  w\init = (χα-ξγ)u\init,\quad z\init=χv_1\init-ξv_2\init,\qquad v\init=v_1\init.
 \]
We let $m=\io w\init$, 
\[
 A=2\max\setl{\norm[\Lom2]{v\init},\norm[\Lom1]{v\init},\norm[\Lom1]{z\init},\norm[\Lom2]{∇z\init}}
\]
and introduce $K>0$ as provided for these choices of $A$ and $m$ by Lemma~\ref{lem:bu}. Aided by Lemma~\ref{lem:Funbounded}, we then pick a sequence $((w^{(k)},z^{(k)}))_{k\inℕ}$ of radially symmetric functions satisfying $w^{(k)}\ge 0$ in $\Ombar$ and $\io w^{(k)}=m$ for every $k\inℕ$, 
\[
 w^{(k)}\to w\init \text{ in } \Lom p\quad \text{ and } z^{(k)}\to z\init \text{ in } \Wom12
\]
as well as $\calF(w^{(k)},z^{(k)})\to -\infty$ as $k\to \infty$. We then let $v^{(k)}=\f1{χ}(ξv_2\init+z^{(k)})_+$ and note that due to $\Wom12\embeddedinto \Lom{\f{2n}{n-2}}$ the term $\f1{χ}(ξv_2\init+z^{(k)})$ converges in $\Lom{\f{2n}{n-2}}$ and the same holds true for its positive part:
\[
 v^{(k)} \to \f1{χ}\kl{ξv_2\init+z\init}_+ = \f1{χ}\kl{χv_1\init}_+=v_1\init =v\init \quad \text{in } \Lom{\f{2n}{n-2}}\text{ as } k\to\infty.
\]
If necessary discarding a finite number of elements from the sequence, on account of the convergence we may assume that 
\[
 \norm[\Lom2]{v^{(k)}}\le A,\quad \norm[\Lom1]{v^{(k)}}\le A, \quad \norm[\Lom1]{z^{(k)}} \le A, \quad \norm[\Lom2]{∇z^{(k)}}\le A
\]
and 
\[
 \calF(w^{(k)},z^{(k)})\le -K
\]
for every $k\inℕ$. Lemma~\ref{lem:bu} then asserts that the corresponding solutions $(w,z,v)$ blow up in finite time. Defining $u=\f1{χα-ξγ}w$, $v_1=v$ and $v_2=\f1{ξ}(χv-z)$ we have blow up of solutions to the original system \eqref{attraction-repulsion}. Concerning nonnegativity of the corresponding initial data $u^{(k)}=\f1{χα-ξγ}w^{(k)}$, $v_1^{(k)}=v^{(k)}$ and $v_2^{(k)}=\f1{ξ}(χv^{(k)}-z^{(k)})$ let us note that nonnegativity of $u^{(k)}$ immediately follows from that of $w^{(k)}$, that of $v_1^{(k)}$ is ensured by the positive part in the definition of $v^{(k)}$ and for 
\[
ξv_2^{(k)}=χv^{(k)}-z^{(k)}=(ξv_2\init + z^{(k)})_+ - z^{(k)}\ge 0
\]
we can rely on the nonnegativity of $v_2\init$.
\end{proof}

\appendix 
\section{Proof of Lemma~\ref{lem:estimate:wz}}\label{Michaelsproof}
For Lemma~\ref{lem:estimate:wz}, we have referred to \cite{win_bu_higherdim}. However, there only the case of $a=1$ was covered and, more importantly, the definition of $S(m,M,B,κ)$ contained the condition that $z$ be nonnegative. In order to avoid a gap in the proof, we discuss necessary changes. However, since most necessary adjustments are rather small, we will confine ourselves to a brief outline. We already change the variable names $(u,v)$ to $(w,z)$ in line with the notation in the earlier proofs.\\ 

Throughout this section, we assume that $m>0$, $M>0$, $B>0$, $κ>n-2$, $n\ge3$, $a>0$ are fixed. \\

Following \cite[(4.5) and (4.6)]{win_bu_higherdim}, but including $a$, we introduce the abbreviations 
\[
 f=-Δz+az-w, \qquad g=\kl{\f{∇w}{\sqrt{w}}-\sqrt{w}∇z}\cdot\f{x}{|x|}=\sqrt{w}∇\kl{\ln w - z}\cdot\f{x}{|x|}. 
\]

\begin{lemma}\label{m4.2}
 There exists $C(M)>0$ such that for all $(w,z)\in S(m,M,B,κ)$ we have 
 \[
  \io w|z| \le 2 \io |∇z|^2 + C(M) \kl{\norm[\Lom2]{Δz-az+w}^{\f{2n+4}{n+4}}+1}.
 \]
\end{lemma}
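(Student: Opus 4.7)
The plan is to exploit the identity $-\Delta z + az = w + f$ (which is just the definition of $f$ rearranged) by testing against the nonnegative truncations $z_+ := \max(z,0)$ and $-z_- := \min(z,0)$ separately, which sidesteps the sign issue that the original argument in \cite{win_bu_higherdim} avoided by assuming $z \ge 0$. Since $z \in C^2(\Ombar)$ with $∂_\nu z = 0$, we have $z_\pm \in \Wom12$ with $∇z_+ = \mathbf{1}_{\{z>0\}}∇z$ and $∇z_- = -\mathbf{1}_{\{z<0\}}∇z$, and integration by parts yields
\[
  \io w z_+ = \norm[\Lom2]{∇z_+}^2 + a\norm[\Lom2]{z_+}^2 - \io f z_+,
\]
\[
  \io w z_- = -\norm[\Lom2]{∇z_-}^2 - a\norm[\Lom2]{z_-}^2 - \io f z_-.
\]
Summing these identities and using $|z|=z_++z_-$, the two nonpositive summands on the second line may simply be discarded, and Cauchy--Schwarz handles $-\io f|z|$, leaving
\[
  \io w|z| \;\le\; \norm[\Lom2]{∇z_+}^2 + a\norm[\Lom2]{z_+}^2 + \norm[\Lom2]{f}\,\norm[\Lom2]{z}.
\]

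From here the argument closely mirrors \cite{win_bu_higherdim}. The Gagliardo--Nirenberg inequality, applied with $\norm[\Lom1]{z_+}\le\norm[\Lom1]{z}\le M$, gives
\[
  \norm[\Lom2]{z_+}^2 \le C_1(M)\kl{\norm[\Lom2]{∇z_+}^{\f{2n}{n+2}}+1}
  \quad\text{and}\quad
  \norm[\Lom2]{z} \le C_1(M)\kl{\norm[\Lom2]{∇z}^{\f{n}{n+2}}+1}.
\]
Young's inequality with the conjugate pair $\bigl(\f{2(n+2)}{n+4},\;\f{2(n+2)}{n}\bigr)$ then converts $\norm[\Lom2]{f}\,\norm[\Lom2]{∇z}^{n/(n+2)}$ into $\varepsilon \norm[\Lom2]{∇z}^2 + C(\varepsilon,M)\norm[\Lom2]{f}^{(2n+4)/(n+4)}$ (the exponent $\tfrac{2n+4}{n+4}$ matches the statement precisely), while the residual $\norm[\Lom2]{f}\cdot 1$ is handled by the elementary $\norm[\Lom2]{f}\le\norm[\Lom2]{f}^{(2n+4)/(n+4)}+1$. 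The $a\norm[\Lom2]{z_+}^2$ term is absorbed analogously. Choosing $\varepsilon$ small enough relative to $a$ and the Gagliardo--Nirenberg constants ensures the total coefficient on $\norm[\Lom2]{∇z}^2$ stays below $2$, and the remaining contributions collect into $C(M)\bigl(\norm[\Lom2]{Δz-az+w}^{(2n+4)/(n+4)}+1\bigr)$.

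The main obstacle, compared with the $z\ge 0$ case, is precisely that initial decomposition: a direct test against $|z|$ produces the indefinite term $\io \sgn(z)|∇z|^2 = \norm[\Lom2]{∇z_+}^2 - \norm[\Lom2]{∇z_-}^2$, whose sign cannot be controlled, so no upper bound in terms of $\norm[\Lom2]{∇z}^2$ is immediate. Splitting $z=z_+-z_-$ turns the defect into an asset: the $-\norm[\Lom2]{∇z_-}^2$ and $-a\norm[\Lom2]{z_-}^2$ contributions on the $\io wz_-$ line are $\le 0$ and are simply thrown away, and the generous prefactor $2$ in the statement provides the slack needed to accommodate the Gagliardo--Nirenberg absorption (any constant strictly greater than $1$ would work). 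Note that neither $B$ nor $\kappa$ from the definition of $S(m,M,B,\kappa)$ enter this lemma; the pointwise decay $|z(x)|\le B|x|^{-\kappa}$ is only required at later stages of the proof of Lemma~\ref{lem:estimate:wz}.
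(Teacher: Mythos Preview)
Your proof is correct, and from the second paragraph on it coincides with the paper's argument (Gagliardo--Nirenberg with $\norm[\Lom1]{z}\le M$, then Young's inequality to produce the exponent $\f{2n+4}{n+4}$). The only difference is the opening step, and there your stated rationale is mistaken. The paper \emph{does} test directly against $|z|$, obtaining
\[
 \io w|z| = \io \sgn(z)\,|\nabla z|^2 + a\io z|z| - \io f|z|,
\]
and then simply uses $\io \sgn(z)\,|\nabla z|^2 \le \io |\nabla z|^2$ (trivially, since $|\sgn(z)|\le 1$) together with $a\io z|z|\le a\norm[\Lom2]{z}^2$. Your claim that ``no upper bound in terms of $\norm[\Lom2]{\nabla z}^2$ is immediate'' is therefore wrong: only an \emph{upper} bound is needed, not sign control, and that bound is immediate. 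In fact, your own discarding of $-\norm[\Lom2]{\nabla z_-}^2-a\norm[\Lom2]{z_-}^2$ is exactly this observation in disguise---summing your two identities \emph{is} testing against $|z|=z_++z_-$, and dropping the negative summands amounts to bounding $\sgn(z)\le 1$. So the $z_\pm$ splitting is valid but redundant; the paper's direct route is shorter and lands in the same place.
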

\begin{proof}
 If we multiply the definition of $f$ by $|z|$, we obtain
 \[
  \io w|z| =\io \f{z}{|z|}|\nabla z|^2 + a\io z|z|- \io f|z|.
 \]
 Since $\left|\io \f{z}{|z|}|\nabla z|^2\right|\le \io |∇z|^2$, $\left|a\io z|z|\right|\le \f12\io |∇z|^2 + c_2$ for some suitable $c_2>0$ and $\left|-\io fz\right|\le \norm[\Lom2]{f}\norm[\Lom2]{z}$, the proof of \cite[Lemma 4.2]{win_bu_higherdim} remains applicable.
\end{proof}

\begin{lemma}\label{m4.3}
 Let $r_0\in(0,R)$ and $ε\in(0,1)$. Then there is $C(ε,m,M,B,κ)>0$ such that for all $(w,z)\in S(m,M,B,κ)$ we have 
 \[
  \int_{\Om\setminus B_{r_0}} |∇z|^2 \le ε\io w|z| + ε\io |∇z|^2 + C(ε,m,M,B,κ) \kl{r_0^{-\f{2n+4}n κ}+ \norm[\Lom2]{Δz-az+w}^{\f{2n+4}{n+4}}}.
 \]
\end{lemma}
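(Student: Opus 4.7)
The strategy is to mimic \cite[Lemma~4.3]{win_bu_higherdim}, but with two small modifications: the presence of the zero-order coefficient $a$ (rather than $1$) and, more importantly, the absence of the sign assumption $z\ge0$. Throughout, I use the quantity $f=-\Delta z+az-w$ introduced before the previous lemma, and I abbreviate $F:=\|f\|_{L^2(\Omega)}=\|\Delta z-az+w\|_{L^2(\Omega)}$.

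The first step is to fix a radially symmetric cut-off $\zeta\in C^\infty(\overline{\Omega})$ with $0\le\zeta\le1$, $\zeta\equiv0$ on $B_{r_0/2}$, $\zeta\equiv1$ on $\Omega\setminus B_{r_0}$, and $|\nabla\zeta|\le c_1/r_0$ supported in the annulus $B_{r_0}\setminus B_{r_0/2}$. Testing the identity $-\Delta z+az=w+f$ against $\zeta^2 z$ and integrating by parts (the Neumann condition $\partial_\nu z=0$ makes the boundary term vanish) gives
\[
 \io \zeta^2 |\nabla z|^2 + a\io \zeta^2 z^2 = -2\io \zeta z\,\nabla\zeta\cdot\nabla z + \io \zeta^2 z w + \io \zeta^2 z f.
\]
Because $\zeta\equiv1$ on $\Omega\setminus B_{r_0}$, the left side dominates $\int_{\Omega\setminus B_{r_0}}|\nabla z|^2$, so the task reduces to absorbing or estimating the three terms on the right. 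The $wz$-term is trivially bounded by $\io w|z|$, matching the first summand on the right of the claim. The cross-term $-2\io \zeta z\,\nabla\zeta\cdot\nabla z$ is estimated by Young's inequality, splitting into $\tfrac12\io\zeta^2|\nabla z|^2$ (absorbed into the left) and $2\io|\nabla\zeta|^2 z^2$; using the pointwise bound $|z(x)|\le B|x|^{-\kappa}$ together with $|\nabla\zeta|\le c_1/r_0$ on the annulus $B_{r_0}\setminus B_{r_0/2}$ gives a bound of the form $C(B)\,r_0^{-2-2\kappa+n}$, which is cruder than — and in particular controlled by — a constant multiple of $r_0^{-\frac{2n+4}{n}\kappa}$ once $\kappa>n-2$.

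The principal step — and the one most affected by dropping the sign restriction on $z$ — is handling the remaining mixed terms $a\io\zeta^2 z^2$ and $\io\zeta^2 z f$. For the former, I split the integration region into $B_{r_0}\setminus B_{r_0/2}$, where the pointwise bound $|z|\le B|x|^{-\kappa}$ delivers a contribution of the same $r_0$-scaling as above, and $\Omega\setminus B_{r_0}$, where I control $\io|z|^2$ by interpolating between $\io|z|$ (bounded by $M$, thanks to membership in $S(m,M,B,\kappa)$) and a higher Lebesgue norm produced by Gagliardo--Nirenberg applied to the entire $z$ (using $|z|$ in place of $z$, since only $|\nabla|z||=|\nabla z|$ a.e.\ enters). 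For the $\int\zeta^2 zf$ term, the Cauchy--Schwarz inequality gives $\|f\|_{L^2}\|\zeta^2 z\|_{L^2}$, and then I treat $\|\zeta^2 z\|_{L^2}$ exactly as in \cite[Lemma~4.3]{win_bu_higherdim}: Gagliardo--Nirenberg with exponents tuned to produce the power $\frac{2n+4}{n+4}$ on $F$ (and the matching $r_0$-power on the remainder), with Young's inequality to extract the $\varepsilon\io|\nabla z|^2$ and $\varepsilon\io w|z|$ absorbing terms. The only conceptual change from \cite{win_bu_higherdim} is the systematic replacement of $z$ by $|z|$ when invoking the pointwise $|x|^{-\kappa}$-bound and the $L^1$-control from $S$; neither the Gagliardo--Nirenberg step nor the optimisation of exponents depends on a sign.

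The main technical obstacle, as in the original proof, lies in choosing the exponents in the Gagliardo--Nirenberg/Young combination so that the two right-hand-side quantities $F^{(2n+4)/(n+4)}$ and $r_0^{-(2n+4)\kappa/n}$ arise with matching interpolation parameters, while simultaneously leaving the $\io w|z|$ and $\io|\nabla z|^2$ contributions multiplied only by an arbitrarily small prefactor $\varepsilon$; this balancing is the source of the curious exponents and is precisely where the condition $\kappa>n-2$ enters.
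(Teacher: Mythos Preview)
Your cutoff approach is genuinely different from the paper's, and as written it contains a real gap. The paper (following \cite[Lemma~4.3]{win_bu_higherdim}) does \emph{not} use a cutoff at all: it tests $-\Delta z+az=w+f$ against the \emph{sublinear} function $\sgn(z)|z|^{\alpha}$ for some $\alpha\in(0,1)$, which yields
\[
\alpha\io |z|^{\alpha-1}|\nabla z|^2 + a\io|z|^{\alpha+1} \le \io w|z|^{\alpha} + \io |f|\,|z|^{\alpha}.
\]
Here the weight $|z|^{\alpha-1}$ combined with the pointwise bound $|z|\le Br^{-\kappa}$ on $\Omega\setminus B_{r_0}$ is what localises the gradient integral to the exterior, and the sublinearity $\alpha<1$ is what produces the small prefactor: Young's inequality gives $w|z|^{\alpha}\le\varepsilon w|z|+C(\varepsilon)w$, and \emph{this} is the origin of the $\varepsilon\io w|z|$ term in the statement.

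In your scheme, testing against $\zeta^2 z$ yields $\io\zeta^2 zw$ with coefficient exactly $1$; the assertion that this ``matches the first summand on the right of the claim'' overlooks that the lemma requires $\varepsilon\io w|z|$. Nothing in the later Gagliardo--Nirenberg/Young treatment of the $fz$-term can retroactively shrink this coefficient, and your remark that Young's inequality there ``extract[s] the $\varepsilon\io w|z|$ absorbing term'' has no basis: that step can only generate $\varepsilon\io|\nabla z|^2$. (You also expend effort on $a\io\zeta^2 z^2$, but that term lies on the \emph{left} of your identity with a favourable sign and can simply be discarded.) Your route is in fact repairable --- replacing the estimate $\io\zeta^2 zw\le\io w|z|$ by the pointwise bound $|z|\le B(r_0/2)^{-\kappa}$ on $\mathrm{supp}\,\zeta$ gives $Cm\,r_0^{-\kappa}$, which is absorbed into $C\,r_0^{-(2n+4)\kappa/n}$, and treating $\io\zeta^2 zf$ the same way yields the claim without the $\varepsilon$-terms at all --- but as presented the argument does not close.
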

\begin{proof}
 Testing the definition of $f$ by $\sgn(z)|z|^{α}$ for some $α\in(0,1)$, we obtain $α\io |z|^{α-1}|∇z|^2+ a\io |z|^{α+1} = \io w|z|^{α}+\io f|z|^{α}$. Merely replacing every other occurrence of $v$ or $v^{α}$ in the proof of \cite[Lemma 4.3]{win_bu_higherdim} by $|z|$ or $|z|^{α}$ instead of by $z$ and $z^{α}$, we can copy said proof.
\end{proof}

\begin{lemma}\label{m4.4}
 There is $C(m)>0$ such that for every $r_0\in(0,R)$ and all $(w,z)\in S(m,M,B,κ)$ we have 
 \[
  \int_{B_{r_0}} |∇z|^2 \le C(m) \kl{r_0\norm[\Lom2]{Δz-az+w}^2 + \norm[\Lom2]{\sqrt{w}∇(\ln w-z)} + \norm[\Lom2]{z}^2 + 1}.
 \]
\end{lemma}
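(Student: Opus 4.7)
The plan is to adapt Winkler's proof of the analogous \cite[Lemma 4.4]{win_bu_higherdim} to our sign-indefinite setting, in the same spirit as the adjustments to Lemmas~\ref{m4.2} and \ref{m4.3} above: absolute values replace plain values wherever Winkler's argument relied on $z\ge 0$, and the constant $a$ is tracked through the identities without structural changes.

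The starting point will be the radial form $(r^{n-1}z_r)_r=r^{n-1}(az-w-f)$ of the equation $f=-\Delta z+az-w$, which, upon integration from $0$ to $r$ using the regularity-induced $z_r(0)=0$, yields the representation
\[
r^{n-1}z_r(r)=a\int_0^r\rho^{n-1}z\,\mathrm{d}\rho-\int_0^r\rho^{n-1}w\,\mathrm{d}\rho-\int_0^r\rho^{n-1}f\,\mathrm{d}\rho,\qquad r\in(0,R).
\]
Writing $\int_{B_{r_0}}|\nabla z|^2=|\partial B_1|\int_0^{r_0}z_r\cdot(r^{n-1}z_r)\,\mathrm{d}r$, substituting this representation and integrating by parts against each of the three resulting summands produces
\[
\int_{B_{r_0}}|\nabla z|^2=|\partial B_1|\,r_0^{n-1}z(r_0)z_r(r_0)-a\int_{B_{r_0}}z^2+\int_{B_{r_0}}wz+\int_{B_{r_0}}fz,
\]
so the task reduces to estimating the boundary term, $\int wz$ and $\int fz$. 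The contribution of $\int fz$ together with Young's inequality, carefully tuned against $r_0$ and the pointwise bound $|z(x)|\le B|x|^{-\kappa}$ inherent in $(w,z)\in S(m,M,B,\kappa)$, will yield the $r_0\|f\|_{L^2}^2$ summand; the $-a\int z^2$ term is absorbed into $\|z\|_{L^2}^2$; and the boundary contribution $r_0^{n-1}z(r_0)z_r(r_0)$ is dominated by combining $|z(r_0)|\le B r_0^{-\kappa}$ with the above representation for $z_r(r_0)$ and the mass bound $\int_0^{r_0}\rho^{n-1}w\,\mathrm{d}\rho\le m/|\partial B_1|$.

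The main obstacle, exactly as in Winkler's original argument, will be the cross-term $\int_{B_{r_0}}wz$: the naive approach of bounding $z$ pointwise and invoking mass conservation fails, because $z$ may be singular precisely where $w$ concentrates, and the resulting bound produces a non-integrable factor $r^{1-n}$ at the origin. To circumvent this, I would exploit the pointwise identity
\[
w\,z_r=w_r-\sqrt{w}\,g
\]
valid in the radial setting (with $g$ as introduced after Lemma~\ref{m4.2}), using it to trade the mass derivative arising after integration by parts against $g$; the Cauchy--Schwarz estimate $\int_{B_{r_0}}|\sqrt{w}\,g|\,\mathrm{d}x\le\sqrt{m}\,\|g\|_{L^2(\Omega)}$ then produces precisely the first-power $\|g\|_{L^2}$ contribution appearing in the statement. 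Collecting all the pieces will yield the claimed inequality with a constant that depends only on $m$ (through the mass bound and, if necessary, through Young's inequality applied with fixed weights). The sign of $z$ enters the argument only through absolute values, so no additional machinery beyond that already used for Lemmas~\ref{m4.2} and \ref{m4.3} will be required.
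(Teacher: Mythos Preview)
Your overall plan—adapt \cite[Lemma~4.4]{win_bu_higherdim}—is correct, but the specific execution you sketch contains a genuine gap and also departs from what the paper (following Winkler) actually does.

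The constant in the statement is $C(m)$: it must not depend on $B$ or $\kappa$. Your proposal, however, invokes the pointwise bound $|z(x)|\le B|x|^{-\kappa}$ from the definition of $S(m,M,B,\kappa)$ to control both the boundary contribution $r_0^{n-1}z(r_0)z_r(r_0)$ and $\int_{B_{r_0}}fz$. Even setting aside the resulting $(B,\kappa)$-dependence, the boundary estimate fails outright: combining $|z(r_0)|\le Br_0^{-\kappa}$ with the mass bound $\int_0^{r_0}\rho^{n-1}w\le m/|\partial B_1|$ leaves a contribution of order $r_0^{-\kappa}$, which is unbounded as $r_0\downarrow 0$, whereas the lemma requires a bound uniform over all $r_0\in(0,R)$. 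The same singularity obstructs your proposed treatment of $\int_{B_{r_0}}fz$. In Winkler's argument the pointwise bound is never used: the $r_0\|f\|_{L^2(\Omega)}^2$ and $\|z\|_{L^2(\Omega)}^2$ contributions come from the Cauchy--Schwarz estimates
\[
\Bigl(\int_0^r\rho^{n-1}f\,\mathrm{d}\rho\Bigr)^2\le \frac{r^n}{n}\int_0^r\rho^{n-1}f^2\,\mathrm{d}\rho,\qquad
\Bigl(\int_0^r\rho^{n-1}z\,\mathrm{d}\rho\Bigr)^2\le \frac{r^n}{n}\int_0^r\rho^{n-1}z^2\,\mathrm{d}\rho,
\]
inserted into an integral of the form $\int_0^{r_0}r^{1-n}(\cdot)^2\,\mathrm{d}r$. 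This is also why nonnegativity of $z$ is irrelevant here: contrary to your opening remark, the paper records \emph{no} sign-related modification for this lemma—unlike Lemmas~\ref{m4.2} and~\ref{m4.3}, only the coefficient $a$ needs to be tracked.
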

\begin{proof}
This is \cite[Lemma~4.4]{win_bu_higherdim} and apart from an additional coefficient $a$ in the definition of $f$ in the first two lines of the proof and in front of the last term in \cite[(4.25)]{win_bu_higherdim} and the preceding inequality, no changes are necessary.
\end{proof}

\begin{lemma}\label{m4.5}
 For each $ε>0$ there is $C(ε,m,M,B,κ)>0$ such that every $(w,z)\in S(m,M,B,κ)$ obeys the estimate 
 \[
  \io |∇z|^2\le ε\io w|z| + C(ε,m,M,B,κ)\kl{\norm[\Lom2]{Δz-az+w}^{2θ}+\norm[\Lom2]{\sqrt{w}∇(\ln w-z)}+1},
 \]
where $θ=\f{1}{1+\f{n}{(2n+4)κ}}\in(\f12,1)$.
\end{lemma}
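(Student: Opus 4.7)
The plan is to follow the blueprint of \cite[Lemma~4.5]{win_bu_higherdim}: split $\io|\nabla z|^2 = \int_{B_{r_0}}|\nabla z|^2 + \int_{\Omega\setminus B_{r_0}}|\nabla z|^2$ at some radius $r_0\in(0,R]$, estimate the outer part by Lemma~\ref{m4.3} (with a small parameter $\varepsilon'>0$) and the inner part by Lemma~\ref{m4.4}, and finally optimize $r_0$. The two $r_0$-dependent quantities to be balanced are $r_0\norm[\Lom2]{f}^2$ (from Lemma~\ref{m4.4}) and $r_0^{-(2n+4)\kappa/n}$ (from Lemma~\ref{m4.3}); their balance dictates the exponent $\theta$ appearing in the statement.

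Before combining, the $\norm[\Lom2]{z}^2$-term on the right of Lemma~\ref{m4.4} has to be controlled uniformly over $S(m,M,B,\kappa)$. Since we no longer assume $z\ge 0$ and $\kappa>n-2$ is arbitrary (so for $n\ge 4$ automatically $\kappa>n/2$), the pointwise estimate $|z(x)|\le B|x|^{-\kappa}$ alone need not yield $z\in L^2$. I would instead invoke the Gagliardo--Nirenberg inequality
\[
 \norm[\Lom2]{z}^2 \le c\bigl(\norm[\Lom2]{\nabla z}^{2n/(n+2)}\norm[\Lom1]{z}^{4/(n+2)} + \norm[\Lom1]{z}^2\bigr),
\]
which combined with $\io|z|\le M$ and Young's inequality (admissible since $\frac{2n}{n+2}<2$ for $n\ge 3$) gives, for every $\eta>0$,
\[
 \norm[\Lom2]{z}^2 \le \eta\norm[\Lom2]{\nabla z}^2 + C_{\mathrm{GN}}(\eta,M).
\]

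Adding the two splitting estimates, replacing $\norm[\Lom2]{z}^2$ by the above, and using the notation $f:=-\Delta z+az-w$ yields
\begin{align*}
 (1-\varepsilon'-\eta C(m))\io|\nabla z|^2 &\le \varepsilon'\io w|z| + C_1\bigl(r_0^{-(2n+4)\kappa/n} + \norm[\Lom2]{f}^{(2n+4)/(n+4)}\bigr) \\
 &\quad + C_2\bigl(r_0\norm[\Lom2]{f}^2 + \norm[\Lom2]{\sqrt{w}\nabla(\ln w-z)} + 1\bigr),
\end{align*}
where $C_{\mathrm{GN}}$ has been absorbed into the constants. Choosing $\varepsilon'$ and $\eta$ small enough that $\varepsilon'+\eta C(m)\le \tfrac12$, and then selecting $r_0:=\min\{R,\norm[\Lom2]{f}^{-2(1-\theta)}\}$, makes $r_0\norm[\Lom2]{f}^2$ and $r_0^{-(2n+4)\kappa/n}$ both equal $\norm[\Lom2]{f}^{2\theta}$ whenever $r_0<R$, while on the complementary regime $r_0=R$ the quantity $\norm[\Lom2]{f}$ is a priori bounded so the $f$-dependent terms collapse to constants. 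A short computation shows that the condition $\kappa>n-2$ forces $\kappa>n/4$ for $n\ge 3$, which is equivalent to $\theta\ge\frac{n+2}{n+4}$, so $\norm[\Lom2]{f}^{(2n+4)/(n+4)}\le \norm[\Lom2]{f}^{2\theta}+1$. Renaming $\varepsilon:=2\varepsilon'$ then produces the claimed inequality.

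The main obstacle is exactly the $\norm[\Lom2]{z}^2$-term in Lemma~\ref{m4.4}: without a sign restriction on $z$, and for the full range $\kappa>n-2$, a direct estimate from the pointwise decay is not available, so one must route through Gagliardo--Nirenberg. This costs a small multiple of $\norm[\Lom2]{\nabla z}^2$ on the right-hand side which has to be absorbed on the left; once this absorption is set up, the $r_0$-optimization is the standard Winkler argument.
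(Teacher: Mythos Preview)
Your proposal is correct and follows essentially the same route as the paper, which in turn merely points to the proof of \cite[Lemma~4.5]{win_bu_higherdim} (arXiv version) with the single replacement $\int uv\rightsquigarrow\int w|z|$. In particular, the splitting at $r_0$, the balancing of $r_0\norm[\Lom2]{f}^2$ against $r_0^{-(2n+4)\kappa/n}$ yielding the stated $\theta$, and the absorption of $\norm[\Lom2]{z}^2$ via the Gagliardo--Nirenberg inequality (using only $\io|z|\le M$, hence insensitive to the sign of $z$) are all exactly as in Winkler's argument; your verification that $\kappa>n-2\ge n/4$ ensures $(2n+4)/(n+4)\le 2\theta$ is the correct way to dispose of the intermediate $\norm[\Lom2]{f}^{(2n+4)/(n+4)}$ term.
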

\begin{proof}
 Here the term $\io uv$ arising from an application of \cite[Lemma~4.3]{win_bu_higherdim} (i.e. Lemma~\ref{m4.3}) has to be replaced by $\io w|z|$. Apart from that, we can follow the proof in the preprint of \cite{win_bu_higherdim} on arxiv.org. (Note that the seemingly slightly better exponent in the published journal version stems from an unfortunate mistake near the end of the proof.)
\end{proof}

\begin{proof}[Proof of Lemma~\ref{lem:estimate:wz}]
 Lemma~\ref{lem:estimate:wz} follows from a combination of Lemma~\ref{m4.2} with Lemma~\ref{m4.5} applied to $ε=\f14$, since $\f{2n+4}{n+4}\le2θ$.
\end{proof}

{
\footnotesize
\setlength{\parskip}{0pt}
\setlength{\itemsep}{0pt plus 0.3ex}

}

\end{document}